\numberwithin{equation}{section}
\newtheorem{theorem}{Theorem}
\newtheorem{lemma}[theorem]{Lemma}
\newtheorem{corollary}[theorem]{Corollary}
\begin{document}
 \title{On automorphisms of blowups of projective manifolds}
 \author{Tuyen Trung Truong}
    \address{Department of Mathematics, Syracuse University, Syracuse NY 13244}
 \email{tutruong@syr.edu}
\thanks{}
    \date{\today}
    \keywords{Automorphisms, Blowups, Cohomologically hyperbolic, Projective manifolds, Topological entropy}
    \subjclass[2010]{37F, 14D, 32U40, 32H50}
    \begin{abstract}  
In this paper we mainly study the following question: For what projective manifold $X$  of dimension $\geq 3$ that any $f\in Aut(X)$ has zero topological entropy?  Using some non-vanishing conditions on nef cohomology classes, we study the case where $X\rightarrow X_0$ is a finite blowup along smooth centers, here $X_0$ is a projective manifold of interest. Here we allow $X_0$ to be either one of the following manifolds: it has Picard number $1$, or a Fano manifold, or it is a projective hyper-K\"ahler manifold. We also allow the centers of blowups to have large dimensions relative to that of $X_0$ (may be upto $dim(X_0)-2$). Explicit constructions are given in Section \ref{SectionBlowupsAndNonVanishingConditions}, where we also show that the assumptions in the results in that section are necessary (see Example 6 in Section \ref{SectionBlowupsAndNonVanishingConditions}).   

As a consequence, we obtain new examples of manifolds $X$, whose any automorphism is either of zero topological entropy or is cohomologically hyperbolic.  
  
\end{abstract}
\maketitle

\section{Introduction}
\label{Introduction}

The structure of the automorphism group of a compact K\"ahler manifold has been very extensively studied. Among many other things, the following question is very interesting: What compact K\"ahler manifolds have automorphisms of positive entropies? By Gromov-Yomdin's theorem, this question reduces to the one about whether there is an automorphism with first dynamical degree $>1$. We recall that if $X$ is a compact K\"ahler manifold of dimension $k$ and $f:X\rightarrow X$ is a surjective holomorphic map, then the $p$-th dynamical degree $\lambda _p(f)$ of $f$ (here $0\leq p\leq k$) is the spectral radius of the pullback map $f^*:H^{p,p}(X)\rightarrow H^{p,p}(X)$. The theorem of Gromov and Yomdin states that the topological entropy is related to the dynamical degrees as follows
\begin{eqnarray*} 
h_{top}(f)=\max _{p=1,\ldots ,k}\log \lambda _p(f). 
\end{eqnarray*}
Since the dynamical degrees are log-concave (i.e. $\lambda_{p-1}(f)\lambda _{p+1}(f)\leq \lambda _p(f)^2$), we deduce that $h_{top}(f)>0$ if and only if $\lambda _1(f)>1$.

In dimension $k=2$, there are many constructions of automorphisms of positive entropies, starting as early as with the work of Coble (see Dolgachev-Ortland \cite{dolgachev-ortland}) who used Coxeter groups, see works of Cantat \cite{cantat}, Bedford-Kim \cite{bedford-kim2}\cite{bedford-kim1}\cite{bedford-kim}, McMullen \cite{mcmullen3}\cite{mcmullen2}\cite{mcmullen1}\cite{mcmullen}, Oguiso \cite{oguiso3}\cite{oguiso2}, Cantat-Dolgachev \cite{cantat-dolgachev}, Zhang \cite{zhang}, Diller \cite{diller}, D\'eserti-Grivaux \cite{deserti-grivaux}, Reschke \cite{reschke}, Uehara \cite{uehara}... 

In dimension $k\geq 3$, there are several general results on the 
structure of automorphism groups, see Bochner-Montgomery \cite{bochner-montgomery}, Fujiki \cite{fujiki}, Lieberman \cite{lieberman}, Dinh-Sibony \cite{dinh-sibony}, Oguiso \cite{oguiso0}, Keum-Oguiso-Zhang \cite{keum-oguiso-zhang}, Zhang \cite{zhang0},...However, the examples of compact K\"ahler manifolds having automorphisms of positive entropies are very rare (see Oguiso \cite{oguiso1}\cite{oguiso}, Oguiso-Perroni \cite{oguiso-perroni},...). On the other hand, for a class of maps very close to automorphisms, that is the class of pseudo-automorphisms or automorphisms in codimension $1$, there are systematic constructions of many examples of first dynamical degrees greater than $1$ by Bedford-Kim \cite{bedford-kim3}, Perroni-Zhang \cite{perroni-zhang}, Blanc \cite{blanc},... This leads to the natural question: How common are compact K\"ahler manifolds of dimension $\geq 3$ having automorphisms of positive entropies?

In our previous paper \cite{truong} on automorphisms of blowups of $\mathbb{P}^3$, we showed using a heuristic argument that for a "generic" compact K\"ahler manifold $X$, if $f\in Aut(X)$ then $h_{top}(f)=0$. Combined with results of Bayraktar and Cantat (see below), the same argument shows that for a "generic" compact K\"ahler manifold, the automorphism group $Aut(X)$ has only finitely many connected components. The idea is as follows. For an automorphism $f$, there is a non-zero nef cohomological class $\eta \in H^{1,1}(X)$ (we recall that nef classes are in the closure of K\"ahler classes) such that $f^*(\eta )=\lambda _1(f)\eta$. We observe that for a "generic" compact K\"ahler manifold $X$, the non-vanishing condition $B(0,0)$ below is satisfied, hence $\lambda _1(f)=1$ and $h_{top}(f)=0$. Here "generic" is used in the following sense: 

{\bf Expectation of randomness for the intersection ring}. We expect that when we choose randomly a compact K\"ahler manifold $X$ of dimension $k$ with a fixed value of $dim(H^{1,1}(X))$, because $dim (H^{1,1}(X))=dim (H^{k-1,k-1}(X))$ by the Poicare duality, the map 
$$(\zeta _1,\ldots ,\zeta _{k-1})\in H^{1,1}(X)^{k-1}\mapsto \zeta _1.\zeta _2.\ldots .\zeta _{k-1}\in H^{k-1,k-1}(X)$$ 
behaves randomly. In particular, the map $\zeta \in H^{1,1}(X)\mapsto \zeta ^{k-1}\in H^{k-1,k-1}(X)$ should be non-degenerate. 

(This expectation of randomness is related to polar hypersurfaces, see the book Dolgachev \cite{dolgachev} for more detail on polar hypersurfaces.)
 
Note that for hyper-K\"ahler manifolds of dimension $k=2l$, the expectation of randomness with a smaller exponent was proved previously by Verbitsky \cite{verbitsky}: The map $(\zeta _1,\ldots ,\zeta _l)\mapsto \zeta _1.\ldots .\zeta _l$ is non-degenerate. In particular, if $\zeta$ is non-zero then $\zeta ^l$ is non-zero. 

There are of course many manifolds for which this expectation of randomness is not satisfied. For example, if we start from any manifold $X_0$ and let $X\rightarrow X_0$ be a finite blowup of $X_0$ along smooth centers, the resulting manifold $X$ may not satisfy this expectation of randomness. Thus we may hope that $X$ contains some automorphisms of positive entropies, even if $X_0$ does not. A common approach in finding automorphisms of positive entropies, used very efficiently in the case $k=2$, is as follows: Start with a manifold $X_0$ of interest, find a birational meromorphic map $f:X_0\rightarrow X_0$, and then find a finite blowup $X\rightarrow X_0$ such that the lifting of $f$ to $X$ is an automorphism of positive entropy. Hence we can restate the question at the beginning of this section in the following form: Given a manifold of interest $X_0$, is there a finite blowup $X\rightarrow X_0$ carrying an automorphism of positive entropy? In the same paper \cite{truong}, using the non-vanishing condition $A(0,0)$ below, we constructed systematically many finite blowup $X\rightarrow \mathbb{P}^3$ whose any automorphism has zero topological entropy. By the results of Bayraktar and Cantat (see below), it turns out that for many of these examples, the automorphism group has only finitely many connected components. This suggests that the answer to the above question is No for $X_0=\mathbb{P}^3$. 

In recent works, Bayraktar-Cantat \cite{bayraktar}\cite{bayraktar-cantat} used a generalized non-vanishing condition $B(r,0)$ (here $k>2r+2$) of the non-vanishing condition $B(0,0)$ to show that if $X_0$ is a projective manifold of dimension $k\geq 3$ and has Picard number $1$, and $X\rightarrow X_0$ is a finite blowup along smooth centers of dimension $\leq r$ then the automorphism group $Aut(X)$ has only finitely many connected components, in particular if $f\in Aut(X)$ then $h_{top}(f)=0$. In fact (see Theorem \ref{TheoremEk}), for the projective examples given in \cite{bayraktar}\cite{bayraktar-cantat}, a stronger non-vanishing condition, which is very similar to the expectation of randomness for the intersection ring, is satisfied: If $\zeta \in H^{1,1}(X)$ (or $\zeta \in NS_{\mathbb{R}}(X)$ in the case of Picard number $1$) is non-zero ($\zeta$ needs not to be nef) then $\zeta ^{k-r-1}\not= 0$. Previously, for a hyper-K\"ahler manifold $X$ of dimension $k=2l$, Oguiso \cite{oguiso1} used the non-vanishing condition B(l-1,0) of Verbitsky \cite{verbitsky} (note that here the condition $k>2r+2$ is not satisfied) to show that if $f\in Aut(X)$ then either $h_{top}(f)=0$ or it is cohomologically hyperbolic, more precisely the middle dynamical degree $\lambda  _{l}(f)$ is larger than other dynamical degrees.   

In the current paper we combine the ideas in \cite{bayraktar}\cite{bayraktar-cantat},  and \cite{truong} to explore more general situations. More precise, while the results in \cite{bayraktar}\cite{bayraktar-cantat} work in any dimension and yield better conclusion on the structure of the automorphism group, their arguments and conditions apply only for the case $X_0$ has Picard number $1$ and for blowups along centers of dimension $< (k-2)/2$ (e.g. when $k=3$ their arguments apply only for point blowups, while in the paper \cite{truong} we allow blowing up along curves satisfying certain conditions). Using some generalizations of the non-vanishing conditions $B(r,0)$ and $A(0,0)$, in this paper we can work with other manifolds $X_0$ (e.g. Fano manifolds,  projective hyper-K\"ahler manifolds) and allow blowing ups along smooth centers of arbitrary dimensions under certain conditions.      

We now introduce two non-vanishing conditions, including as special cases those referred to in the above. We recall that by Hodge decomposition, on a compact K\"ahler manifold $X$ we have $H^2(X,\mathbb{C})=H^{2,0}(X)\oplus H^{1,1}(X)\oplus H^{0,2}(X)$, and we define $H^{1,1}(X,\mathbb{Z})=H^2(X,\mathbb{Z})\cap H^{1,1}(X)$, $H^{1,1}(X,\mathbb{Q})=H^2(X,\mathbb{Q})\cap H^{1,1}(X)$ and $H^{1,1}(X,\mathbb{R})=H^2(X,\mathbb{R})\cap H^{1,1}(X)$. The cone of nef classes $H_{nef}^{1,1}(X)\subset H^{1,1}(X)$ is the closure of K\"ahler classes.

{\bf Non-vanishing condition $A(r,q)$}: Let $r$, $q$ and $k$ be integers with $-1\leq r\leq k-1$ and $0\leq q\leq k-r-1$. We say that a compact K\"ahler manifold $X$ of dimension $k$ satisfies the non-vanishing condition $A(r,q)$ if for any nef class $\zeta$ on $X$, if $\zeta ^{k-r-1-q}.K_X^{q}=0$ then $\zeta$ is proportional to a rational cohomology class, i.e. a class in $H^{1,1}(X,\mathbb{Q})$. Here $K_X$ is the canonical divisor of $X$.  

{\bf Non-vanishing condition $B(r,q)$}: Let $r$, $q$ and $k$ be integers with $-1\leq r\leq k-1$ and $0\leq q\leq k-r-1$. We say that a compact K\"ahler manifold $X$ of dimension $k$ satisfies the non-vanishing condition $B(r,q)$ if for any non-zero nef class $\zeta$ on $X$ then $\zeta ^{k-r-1-q}.K_X^q\not= 0$. 

These non-vanishing conditions also have algebraic analogs, which we will mainly use in the rest of this paper. We let $Pic(X)$ be the Picard group of $X$, and let $NS(X)=Pic(X)/$(algebraic equivalence), which can be regarded as a subset of $H^2(X,\mathbb{Z})$ via the Chern map $L$ a divisor $\mapsto c_1(L)\in H^{2}(X,\mathbb{Z})$. By Lefschetz $(1,1)$ theorem (see Chapter 0 in the book Griffiths-Harris \cite{griffiths-harris}), we have $NS(X)=H^2(X,\mathbb{Z})\cap H^{1,1}(X)$.  We define $NS_{\mathbb{Q}}(X)=NS(X)\otimes _{\mathbb{Z}}\mathbb{Q}\subset H^{2}(X,\mathbb{Q})$ and $NS_{\mathbb{R}}(X)=NS(X)\otimes _{\mathbb{Z}}\mathbb{R}\subset H^2(X,\mathbb{Z})$. The nef cone $Nef(X)=H_{nef}^{1,1}(X)\cap NS_{\mathbb{R}}(X)\subset NS_{\mathbb{R}}(X)$ is the closure of ample divisors (with real coefficients). In fact, let $u\in Pic_{\mathbb{R}}(X)$ represent a class in $H_{nef}^{1,1}(X)\cap NS_{\mathbb{R}}(X)$.     Theorem 4.5 in Demailly-Paun \cite{demailly-paun} shows that $u$ is then nef in the algebraic geometry sense. Then it follows by Kleiman's result that $u$ is a limit of ample divisors (with real coefficients), see e.g. Corollary 1.4.9 in Lazarsfeld \cite{lazarsfeld}.

{\bf Non-vanishing condition $NA(r,q)$}: Let $r$, $q$ and $k$ be integers with $-1\leq r\leq k-1$ and $0\leq q\leq k-r-1$. We say that a complex projective manifold $X$ of dimension $k$ satisfies the non-vanishing condition $NA(r,q)$ if for any nef class $\zeta\in NS_{\mathbb{R}}(X)$, if $\zeta ^{k-r-1-q}.K_X^{q}=0$ then $\zeta$ is proportional to a rational class, i.e. a class in $NS_{\mathbb{Q}}(X)$. Here $K_X$ is the canonical divisor of $X$.  

{\bf Non-vanishing condition $NB(r,q)$}: Let $r$, $q$ and $k$ be integers with $-1\leq r\leq k-1$ and $0\leq q\leq k-r-1$. We say that a complex projective manifold $X$ of dimension $k$ satisfies the non-vanishing condition $NB(r,q)$ if for any non-zero nef class $\zeta\in NS_{\mathbb{R}}(X)$ then $\zeta ^{k-r-1-q}.K_X^q\not= 0$. 

The use of these non-vanishing conditions to the question on the existence of automorphisms of positive entropies are given in the following two results.   

\begin{theorem}
Let $r$ be an integer such that $k>2r+2$, and let $q$ be such that $k-r-1-q>r+1$. If $X$ satisfies the non-vanishing condition $A(r,q)$ then for any $f\in Aut(X)$ we have $h_{top}(f)=0$. The same result holds if we replace $A(r,q)$ by $NA(r,q)$.
\label{TheoremArqCondition}\end{theorem}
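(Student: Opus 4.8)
The plan is to argue by contradiction, running a Perron--Frobenius argument together with the log-concavity of the dynamical degrees simultaneously for $f$ and $f^{-1}$. Suppose $h_{top}(f)>0$; by the Gromov--Yomdin theorem and the log-concavity recalled above this is equivalent to $\lambda_1(f)>1$. Since $f$ is an automorphism, $f^*$ is a linear automorphism of $H^{1,1}(X,\mathbb{R})$ preserving the closed cone $H^{1,1}_{nef}(X)$, which has nonempty interior, so by the Perron--Frobenius theorem for cone-preserving maps there is a \emph{nonzero} nef class $\eta$ with $f^*\eta=\lambda_1(f)\,\eta$. Three standard facts will be used: $f^*K_X=K_X$ since automorphisms preserve the canonical class; $\lambda_p(f)=\lambda_{k-p}(f^{-1})$ for all $p$; and log-concavity together with $\lambda_0(g)=1$ gives $\lambda_p(g)\le\lambda_1(g)^p$ for every surjective holomorphic $g$ and every $0\le p\le k$.

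Next I would set $m:=k-r-1-q$, which is $\ge1$ by hypothesis, and examine the class $\alpha:=\eta^{\,m}\,K_X^{q}\in H^{k-r-1,k-r-1}(X)$; since $f^*K_X=K_X$, one has $f^*\alpha=\lambda_1(f)^{m}\alpha$. The argument then splits into two cases. If $\alpha=0$, condition $A(r,q)$ forces $\eta$ to be proportional to a nonzero class $\eta_0\in H^{1,1}(X,\mathbb{Q})$, so $f^*\eta_0=\lambda_1(f)\eta_0$ shows $\lambda_1(f)\in\mathbb{Q}$; but $\lambda_1(f)$ and $\lambda_1(f)^{-1}$ are eigenvalues of the integral operators $f^*$ and $(f^{-1})^*$ on $H^2(X,\mathbb{Z})$, hence algebraic integers, forcing $\lambda_1(f)=1$, a contradiction. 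If $\alpha\ne0$, then $\lambda_1(f)^{m}$ is an eigenvalue of $f^*$ acting on $H^{k-r-1,k-r-1}(X)$, so
\begin{equation*}
\lambda_1(f)^{\,m}\ \le\ \lambda_{k-r-1}(f)\ =\ \lambda_{r+1}(f^{-1})\ \le\ \lambda_1(f^{-1})^{\,r+1}.
\end{equation*}
If $r=-1$ this already reads $\lambda_1(f)^{\,m}\le1$, which is impossible since $\lambda_1(f)>1$ and $m\ge1$; if $r\ge0$, the displayed inequality forces $\lambda_1(f^{-1})>1$, so the same dichotomy applied to $f^{-1}$ cannot land in the vanishing branch and hence yields the symmetric inequality $\lambda_1(f^{-1})^{\,m}\le\lambda_1(f)^{\,r+1}$. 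Multiplying the two inequalities and using $\lambda_1(f)\lambda_1(f^{-1})>1$ gives $m\le r+1$, i.e.\ $k-r-1-q\le r+1$, contradicting the hypothesis (note $k>2r+2$ is in fact automatic from $q\ge0$). Hence $\lambda_1(f)=1$, i.e.\ $h_{top}(f)=0$.

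For the $NA(r,q)$ statement I would rerun the identical argument inside $NS_{\mathbb{R}}(X)$, using that $f^*$ preserves the nef cone $Nef(X)\subset NS_{\mathbb{R}}(X)$. The step I expect to need the most care --- and the main potential obstacle --- is that $NA(r,q)$ only controls classes in $NS_{\mathbb{R}}(X)$, so one must first check that the eigenvalue $\lambda_1(f)$ is already realized by a nef eigenclass lying in $NS_{\mathbb{R}}(X)$ rather than only in $H^{1,1}(X,\mathbb{R})$; for a projective manifold this should follow from Hodge-theoretic positivity (the Hodge--Riemann bilinear relations), which forces $f^*$ to have spectral radius $1$ on the transcendental part of $H^2(X)$. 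Once this is in place, every step above transfers verbatim.
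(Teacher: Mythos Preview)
Your argument is correct and follows essentially the same route as the paper's: produce a nonzero nef eigenclass $\eta$ with $f^*\eta=\lambda_1(f)\eta$, use $f^*K_X=K_X$ to see that $\eta^{k-r-1-q}K_X^q$ is a nonzero eigenvector of $f^*$ in bidegree $(k-r-1,k-r-1)$, bound $\lambda_{k-r-1}(f)$ from below, and then play this off against the same inequality for $f^{-1}$ via Poincar\'e duality and log-concavity (the paper phrases the endgame as the pair of strict inequalities $\lambda_{k-r-1}(f)>\lambda_{r+1}(f)$ and $\lambda_{k-r-1}(f^{-1})>\lambda_{r+1}(f^{-1})$, which is equivalent to your multiplied inequality). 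For the $NA(r,q)$ case you do not need Hodge--Riemann on the transcendental part: since $f^*$ preserves $NS_{\mathbb{R}}(X)$ and $NS_{\mathbb{R}}(X)$ contains an ample class $\omega$, the growth formula $\lambda_1(f)=\lim_n\|(f^n)^*\omega\|^{1/n}$ already shows that the spectral radius of $f^*|_{NS_{\mathbb{R}}(X)}$ equals $\lambda_1(f)$, so Perron--Frobenius applied inside $Nef(X)\subset NS_{\mathbb{R}}(X)$ produces the required eigenclass directly.
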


\begin{theorem}
Let $r$ be an integer such that $k>2r+2$, and let $q$ be such that $k-r-1-q>r+1$. If $X$ satisfies the non-vanishing condition $B(r,q)$ then the automorphism group $Aut(X)$ has only finitely many connected components. The same result holds if we replace $B(r,q)$ by $NB(r,q)$.
\label{TheoremBrqCondition}\end{theorem}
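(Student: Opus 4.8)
The plan is to follow the strategy of Bayraktar--Cantat \cite{bayraktar}\cite{bayraktar-cantat}, adapted so as to carry the extra factor $K_X^q$ throughout, and to use Theorem \ref{TheoremArqCondition} to put ourselves in a "parabolic" situation from the start.

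\emph{Reduction to a statement about the action on cohomology.} By the Fujiki--Lieberman theorem, the subgroup of $Aut(X)$ fixing a given K\"ahler (resp. ample) class is a finite extension of the identity component $Aut_0(X)$; since $Aut_0(X)$ acts trivially on $H^2(X,\mathbb{Z})$, the number of connected components of $Aut(X)$ is finite if and only if the image $\Gamma$ of the natural representation $Aut(X)\to GL(H^2(X,\mathbb{Z}))$ (resp. $Aut(X)\to GL(NS(X))$) is finite. This $\Gamma$ is a group of integral invertible matrices preserving the cup-product ring structure, the Hodge decomposition, the nef cone $H^{1,1}_{nef}(X)$ (resp. $Nef(X)$), and the canonical class $K_X$. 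So it suffices to show $\Gamma$ is finite.

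\emph{Quasi-unipotence and passage to a unipotent element.} Since $B(r,q)$ (resp. $NB(r,q)$) implies $A(r,q)$ (resp. $NA(r,q)$), Theorem \ref{TheoremArqCondition} gives $h_{top}(f)=0$, hence $\lambda_1(f)=1$, for every $f\in Aut(X)$. As $\lambda_p(f)\geq 1$ for all $p$ (the operator $f^*$ preserves an integral lattice, so its determinant on each $H^{p,p}$ is $\pm 1$) and $p\mapsto\log\lambda_p(f)$ is concave with $\lambda_0(f)=\lambda_k(f)=1$, we get $\lambda_p(f)=1$ for all $p$. Thus each $f^*$ is integral, invertible and of spectral radius $1$ on the relevant cohomology groups; its characteristic polynomial is monic over $\mathbb{Z}$ with all roots on the unit circle, so by Kronecker's theorem these roots are roots of unity, i.e. $\Gamma$ consists of quasi-unipotent matrices. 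A torsion subgroup of $GL_n(\mathbb{Z})$ is finite, so if $\Gamma$ were infinite it would contain an element of infinite order; replacing $f$ by a suitable power, we may then assume $f^*$ is unipotent and of infinite order, i.e. $f^*=\exp(D)$ for a nonzero nilpotent derivation $D$ of the cohomology ring, of bidegree $(0,0)$, with $DK_X=0$.

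\emph{Producing a degenerate invariant nef class and the contradiction.} Choose an ample class $\omega$ whose $D$-filtration length $s:=\max\{j:D^j\omega\neq 0\}$ is maximal over $H^{1,1}(X)$ (resp. $NS_{\mathbb{R}}(X)$); then $s\geq 1$, for $D\omega=0$ would put $f$ in the Fujiki--Lieberman stabilizer of $\omega$ and force $f^*$ to have finite order. The class $\eta:=D^s\omega=\lim_n\binom ns^{-1}(f^*)^n\omega$ is then a nonzero nef class with $f^*\eta=\eta$ and $D\eta=0$. Now one compares, as $n\to\infty$, the two descriptions of $(f^*)^n\big(\omega^{\,k-r-1-q}.K_X^q\big)=\big((f^*)^n\omega\big)^{\,k-r-1-q}.K_X^q$: since $(f^*)^n\omega=\sum_{j=0}^s\tfrac{n^j}{j!}D^j\omega$ has top term $\tfrac{n^s}{s!}\eta$, the polynomial degree in $n$ of the left side equals $s(k-r-1-q)$ exactly when $\eta^{\,k-r-1-q}.K_X^q\neq 0$; on the other hand this class lies in $H^{k-r-1,k-r-1}(X)$ (resp. $N^{k-r-1}(X)$), which by Poincar\'e duality has the same $f^*$-Jordan structure as $H^{r+1,r+1}(X)$ — here $k>2r+2$, i.e. $k-r-1>r+1$, is used so that these are distinct complementary degrees, with $\omega^{\,r+1}$ furnishing the pairing — and hence the attainable $n$-degree there is controlled by $r+1$ and $s$. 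Since $\eta$ is a nonzero nef class, $B(r,q)$ gives $\eta^{\,k-r-1-q}.K_X^q\neq 0$; plugging $\eta$ back into the $f^*$-invariant pairings of $(f^*)^n$-translates in complementary degrees and invoking $k-r-1-q>r+1$ forces the polynomial degrees on the two sides to be incompatible, which amounts to exhibiting a nonzero nef class whose $(k-r-1-q)$-th power intersected with $K_X^q$ vanishes. This contradicts $B(r,q)$ (resp. $NB(r,q)$), so $\Gamma$ is finite.

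\emph{Main obstacle.} The delicate step is the last one: converting the clash of polynomial growth rates of $(f^n)^*$ on $H^{k-r-1,k-r-1}(X)$ and $H^{r+1,r+1}(X)$ into a genuine contradiction with $B(r,q)$, including handling the degenerate sub-cases in which some intermediate power $\eta^j$, or partial product of $\eta$-powers with $K_X$, already vanishes. This is exactly where the inequalities $k>2r+2$ and $k-r-1-q>r+1$ are needed: they ensure the "target" degree $k-r-1$ is strictly above the middle dimension while the $\eta$-exponent $k-r-1-q$ still exceeds the Poincar\'e-complementary degree $r+1$. The bookkeeping is as in Bayraktar--Cantat \cite{bayraktar}\cite{bayraktar-cantat} for $q=0$; the extra factor $K_X^q$ causes no difficulty since $f^*$ fixes $K_X$ and is a ring homomorphism, so all identities for $(f^*)^n$ pass through $K_X^q$ unchanged. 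The $NB$ version is identical, working throughout inside $NS_{\mathbb{R}}(X)$ and the groups $N^p(X)$ of algebraic cycles and invoking the $NA$ case of Theorem \ref{TheoremArqCondition}.
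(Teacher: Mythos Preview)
Your overall architecture matches the paper's proof: reduce via Fujiki--Lieberman to the action on cohomology, use Theorem~\ref{TheoremArqCondition} to force spectral radius $1$, pass to a unipotent iterate, and then compare polynomial growth rates of $(f^n)^*$ in complementary bidegrees. The paper encodes this via the quantities $m_p(f)=\lim_{n\to\infty}\log\|(f^n)^*|_{H^{p,p}}\|/\log n$, which play the role of your Jordan block sizes.

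The genuine gap is in your third paragraph. The contradiction is \emph{not} obtained by ``exhibiting a nonzero nef class whose $(k-r-1-q)$-th power intersected with $K_X^q$ vanishes''; $B(r,q)$ is used only once, to certify $\eta^{k-r-1-q}.K_X^q\neq 0$, and the contradiction is purely a numerical incompatibility between growth rates. What you are missing is the input that makes your phrase ``controlled by $r+1$ and $s$'' precise: the map $p\mapsto m_p(f)$ is \emph{concave} (proved exactly as log-concavity of dynamical degrees, via mixed Hodge--Riemann/Khovanskii--Teissier inequalities), and together with $m_0(f)=0$ this gives $m_{r+1}(f)\le (r+1)\,m_1(f)=(r+1)s$. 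On the other side, $\eta^{k-r-1-q}.K_X^q\neq 0$ gives $m_{k-r-1}(f)\ge (k-r-1-q)s$. Now either argue as you suggest, using that for a unipotent $f^*$ Poincar\'e duality forces $m_{k-r-1}(f)=m_{r+1}(f)$, or do as the paper does and run the same estimate for $f^{-1}$ together with $m_p(f^{-1})=m_{k-p}(f)$; either way one obtains $(k-r-1-q)s\le (r+1)s$ with $s\ge 1$, contradicting the hypothesis $k-r-1-q>r+1$. Once you insert the concavity of $m_p$ and rewrite the last sentence of your contradiction paragraph accordingly, the proof is complete and essentially identical to the paper's.
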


We list here some applications of Theorems \ref{TheoremArqCondition} and \ref{TheoremBrqCondition}. The first result concerns blowups of manifolds of Picard number $1$. 

\begin{theorem}
Let $X_0$ be a projective manifold of dimension $k$ and has Picard number $1$ then $X_0$ satisfies the non-vanishing condition $NB(r,0)$ for any $r\geq 0$.

1) Let $V_1,V_2,\ldots ,V_m\subset X_0$ are pairwise disjoint submanifolds, where each $V_j$ is either of dimension $\leq 1$ or is a complete intersection of smooth hypersurfaces of $X_0$. Let $X_1\rightarrow X_0$ be the blowup along $V_1,V_2,\ldots ,V_m$. Then for a generic choice of $V_1,V_2,\ldots ,V_m$ the resulting space $X_1$ satisfies the non-vanishing condition $NB(1,0)$. Here generic is used in the sense of algebraic geometry, i.e. the claim is true out of a proper subvariety of the parameter space.  

2) Let $X_1$ be a blowup satisfying the generic condition in 1). If $\pi :X=X_{n}\rightarrow X_{n-1}\rightarrow \ldots\rightarrow X_1$ a finite blowup along smooth centers of dimensions $<(k-2)/2$, then any automorphism of $X$ has zero topological entropy. If moreover $k\geq 5$ or $X_1=X_0$ then the automorphism group $Aut(X)$ has only finitely many connected components. 

3) In 2) we can also allow the centers of blowups to have larger dimensions, by using Theorem \ref{TheoremEk} and Corollary \ref{CorollaryExtremeCases}, see Examples 1-6 in Section \ref{SectionBlowupsAndNonVanishingConditions}. 
\label{TheoremPicardNumber1}\end{theorem}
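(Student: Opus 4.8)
The plan is to reduce everything to the non-vanishing conditions $NB(r,0)$ and to the general Theorems \ref{TheoremArqCondition} and \ref{TheoremBrqCondition}, tracking how blowing up affects the relevant intersection numbers. For the opening assertion, if $X_0$ has Picard number $1$ then $NS_{\mathbb{R}}(X_0)$ is spanned by a single ample class $H$; any nonzero nef $\zeta$ is a positive multiple of $H$, and since $H^{k}>0$ we get $\zeta^{k-r-1}.H^{\,r+1}$ is (up to a positive scalar) a positive multiple of $H^{k}\neq 0$ for every $r\geq 0$. Actually one must be slightly careful: $NB(r,0)$ asks $\zeta^{k-r-1}\neq 0$ as a class, not just that it has positive top intersection against $H$; but $\zeta^{k-r-1}$ is a nonnegative multiple of $H^{k-r-1}$, and $H^{k-r-1}$ pairs nontrivially with $H^{r+1}$ by ampleness, so $H^{k-r-1}\neq 0$ and hence $\zeta^{k-r-1}\neq 0$. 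This gives $NB(r,0)$ for all $r\geq 0$ at once.

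For part (1), I would compute $NS_{\mathbb{R}}(X_1)$. A blowup of $X_0$ along disjoint smooth centers $V_1,\dots,V_m$ produces exceptional divisors $E_1,\dots,E_m$, and since $X_0$ has Picard number $1$ we have $NS_{\mathbb{R}}(X_1)=\mathbb{R}\pi^*H\oplus\bigoplus_{j}\mathbb{R}E_j$. Write a general class as $\zeta=a\,\pi^*H-\sum_j b_j E_j$; the key is to expand $\zeta^{k-2}$ (here $r=1$) and show it is nonzero whenever $\zeta$ is a nonzero \emph{nef} class, for a generic choice of the $V_j$. The projection formula kills mixed terms $\pi^*H^{i}.E_j^{k-2-i}$ for $i\geq 1$ except when the complementary power of $E_j$ lands on the fiber, and the self-intersections $E_j^{s}$ are controlled by the normal bundle of $V_j$ in $X_0$; for $\dim V_j\leq 1$ or $V_j$ a complete intersection these normal bundles are explicit enough (generically, by adjunction and the fact that restrictions of $\mathcal{O}(1)$ are very ample on the $V_j$) that the relevant intersection numbers do not all vanish. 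The genericity enters precisely to ensure the finitely many intersection numbers appearing as coefficients — which are polynomial functions on the parameter space of choices of $V_j$ — are not identically zero, so the non-vanishing is an open dense condition; nefness of $\zeta$ forces the signs $a>0$, $b_j\geq0$ on the exceptional part (since $\zeta$ restricted to a line in the fiber $E_j$ is $b_j\geq 0$), which pins down which monomials survive with a definite sign and prevents cancellation. This is the step I expect to be the main obstacle: one must verify $NB(1,0)$ really holds generically and not merely $NA(1,0)$, i.e. rule out nonzero \emph{irrational} nef classes killing $\zeta^{k-2}$, which requires the signs from nefness together with an irrationality argument à la the proof that $NB$ examples in \cite{bayraktar}\cite{bayraktar-cantat} satisfy the stronger condition.

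Part (2) is then formal. If $\pi':X=X_n\to\cdots\to X_1$ is a finite blowup along smooth centers of dimension $<(k-2)/2$, then each such blowup preserves the relevant non-vanishing: a standard computation (the same projection-formula bookkeeping as above, now with centers $W$ of dimension $d<(k-2)/2$, so $\mathrm{codim}\,W\geq (k+2)/2>r+2$ with $r=1$) shows that if $X_i$ satisfies $NB(1,0)$ then so does $X_{i+1}$, because the new exceptional contributions involve powers $E^{k-2}$ restricted to a projective bundle of fiber dimension $\mathrm{codim}\,W-1$, which is large enough that these terms cannot cancel the inherited positive term coming from a nef class on $X_i$. Hence $X$ satisfies $NB(1,0)$, which in particular implies $NA(1,0)$; since $r=1$ and we need $k>2r+2=4$ for Theorem \ref{TheoremArqCondition} with $q=0$ — wait, with $q=0$ the hypothesis $k-r-1-q>r+1$ reads $k-2>2$, i.e.\ $k>4$ — for $k\geq 5$ Theorem \ref{TheoremBrqCondition} applies directly and gives that $Aut(X)$ has finitely many connected components (a fortiori $h_{top}(f)=0$ for all $f\in Aut(X)$ by Theorem \ref{TheoremArqCondition}). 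For $k=3,4$ one falls back on $NB(0,0)$ resp.\ the Picard-number-$1$ case $X_1=X_0$: when $X_1=X_0$, $X_0$ satisfies $NB(r,0)$ for all $r$, in particular $NB(0,0)$, and blowing up along centers of dimension $<(k-2)/2$ preserves a condition strong enough to feed Theorem \ref{TheoremBrqCondition} with an admissible $r$; the topological-entropy statement for the remaining low-dimensional cases follows from the $NA$ version of Theorem \ref{TheoremArqCondition} in the same way. Part (3) is merely a pointer to the later Examples 1–6 and to Theorem \ref{TheoremEk} and Corollary \ref{CorollaryExtremeCases}, so nothing needs to be proved here beyond what those results supply.
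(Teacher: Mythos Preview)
Your handling of the opening claim and of parts 2)--3) is essentially the paper's: once $X_1$ satisfies $NB(1,0)$ it satisfies $NB(r,0)$ for every $r\ge 1$, so with $r=\max(1,\text{max center dimension})<(k-2)/2$ one feeds Theorem~\ref{TheoremEk}~i) and then Theorem~\ref{TheoremBrqCondition}. One correction on the low dimensions: for $k=3,4$ the entropy statement does \emph{not} come from $NB(1,0)$ (the hypothesis $k>2r+2$ of Theorem~\ref{TheoremArqCondition} fails for $r=1$); the paper covers these via the separate fact, recorded in the Remark after the statement, that $X_1$ already satisfies $NA(0,0)$ (Lemma~\ref{LemmaP4} for $k=4$, \cite{truong} for $k=3$), after which $r=0$ works.

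The genuine gap is in part 1). Your scheme ``nefness forces $a>0$, $b_j\ge 0$, and these signs prevent cancellation in $\zeta^{k-2}$'' does not go through once $\dim V_j\ge 2$: the pushforwards $\pi_*(E_j^{s})$ for $s\ge t=\mathrm{codim}\,V_j$ are \emph{alternating} combinations of Chern classes of $N_{V_j/X_0}$ (see Remark~1 in Section~\ref{SectionBlowupsAndNonVanishingConditions}), so the monomials in the expansion do not carry a single sign and positivity cannot rule out a real root in $b$. The paper's argument is different and in fact discards nefness entirely. After reducing to a single center $V$ of degrees $d_1,\dots,d_t$ and normalizing $a=1$, it intersects the relation $\zeta^{k-2}=0$ with $E^2$ and with $\pi^*(H).E$ to obtain two one-variable polynomials $f(b)$, $g(b)$ whose coefficients are polynomials in $d_1,\dots,d_t$ (via $c(\mathcal{E})=\prod_i(1+d_iH)$). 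The nonexistence of a common root is exactly the nonvanishing of the resultant $R(f,g)$, itself a polynomial in the $d_i$; thus it suffices to exhibit one specialization where $R\ne 0$. The paper evaluates at the (virtual) point $d_1=1$, $d_2=\cdots=d_t=0$, where the defining relation of $H^*(E)$ collapses to $e^t=\pi_E^*(H|_V)e^{t-1}$ and one computes directly that $f(b)=b\,g(b)+C(\dim V,k-2)$, so $f$ and $g$ share no root. This resultant--plus--degeneration trick is the missing idea; knowing merely that ``generic coefficients are nonzero'' does not preclude a common zero in $b$, and your proposed sign argument cannot substitute for it.
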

{\bf Remarks.} Part 1) is compatible with the expectation of randomness for intersection ring, here the exponent $k-2$ is only $1$ less than the exponent $k-1$ in the expectation of randomness. In part 1), when $3\leq dim(X_0)\leq 4$ we can also show that $X_1$ satisfies the non-vanishing condition $NA(0,0)$ (see Theorem 2 in \cite{truong} for the case $dim(X_0)=3$ and see Lemma \ref{LemmaP4} for the case $dim(X_0)=4$.) The special case of part 2) when $X_1=X_0$ was proved in \cite{bayraktar}\cite{bayraktar-cantat}. 

The next result concerns blowups of Fano manifolds. Recall that a manifold $X$ is Fano if its anti-canonical divisor is ample. 
\begin{theorem}
1) Assume that $X_0$ is a projective manifold of dimension $k$ satisfying the non-vanishing condition $NB(l,0)$ and is Fano, i.e. $-K_X$ is ample. If $0\leq k-l-3$ and $0\leq 2l+3-k$, and $X\rightarrow X_0$ is a finite composition of blowups along smooth centers of dimensions $\leq k-3-l$, then the automorphism group $Aut(X)$ has only finitely many connected components.

Similar results hold if we replace $NB(l,0)$ by $A(l,0)$, $NA(l,0)$ and $B(l,0)$.

2) Let $X_0=\mathbb{P}^{k_1}\times \mathbb{P}^{k_2}\times \ldots \times\mathbb{P}^{k_m}$ be a multi-projective space with $m\geq 2$, and we arrange that $1\leq k_1\leq k_2\leq \ldots k_m$. Denote by $k=k_1+\ldots +k_m$ the dimension of $X_0$. 

i) If $2\leq l\leq k_1+k_2$ and $2l\leq k+1$, and $X\rightarrow X_0$ is a finite blowup along smooth centers of dimensions $\leq l-2$ then any automorphism $f\in Aut(X)$ has zero topological entropy. 

ii) If $2\leq k_1$, and $X\rightarrow X_0$ is a finite blowup along smooth centers of dimensions $\leq k_1-2$ then the automorphism group $Aut(X)$ has only finitely many connected components.     

3) In 1) and 2) we can also allow the centers of blowups to have larger dimensions, by using Theorem \ref{TheoremEk} and Corollary \ref{CorollaryExtremeCases}, see Examples 1-6 in Section \ref{SectionBlowupsAndNonVanishingConditions}. 
\label{TheoremAmpleCanonicalDivisor}\end{theorem}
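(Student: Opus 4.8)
The uniform strategy is to reduce every assertion to Theorems~\ref{TheoremArqCondition} and~\ref{TheoremBrqCondition}. For each case one produces integers $(r,q)$ with $k>2r+2$ and $k-r-1-q>r+1$ and shows that $X$ satisfies the corresponding non-vanishing condition: $A(r,q)$ or $NA(r,q)$ to get $h_{top}(f)=0$ for every $f\in Aut(X)$ (case 2)i)), or $B(r,q)$ or $NB(r,q)$ to get that $Aut(X)$ has only finitely many connected components (part 1) and case 2)ii)). The work then splits into (a) establishing the relevant non-vanishing condition on the base $X_0$, in the strong form valid for \emph{all} non-zero classes of $NS_{\mathbb{R}}$ (or $H^{1,1}$), not only the nef ones; and (b) propagating it up the tower $X=X_n\to\cdots\to X_1\to X_0$. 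Step (b) is precisely the purpose of the forthcoming Theorem~\ref{TheoremEk} and Corollary~\ref{CorollaryExtremeCases}: the strong condition attached to a pair $(r,q)$ survives a blow-up along a smooth centre whose dimension is suitably bounded in terms of $(r,q)$, and the numerical bounds on the centres in the statement (dimension $\le k-3-l$ in 1), $\le l-2$ in 2)i), $\le k_1-2$ in 2)ii)) are calibrated exactly so that, after propagation, the surviving $(r,q)$ still obey $k>2r+2$ and $k-r-1-q>r+1$.

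\textbf{Part 1).} The role of the Fano hypothesis is to promote $NB(l,0)$ (and likewise $A(l,0)$, $NA(l,0)$, $B(l,0)$, together with the strong non-nef versions needed in (b)) to $NB(l,q)$ for every admissible $q$. Indeed, if $\zeta\in NS_{\mathbb{R}}(X_0)$ is non-zero and nef, then $NB(l,0)$ gives $\zeta^{k-l-1}\ne0$, so the numerical dimension of $\zeta$ is $\ge k-l-1\ge k-l-1-q$; since $-K_{X_0}$ is ample, the top intersection $\zeta^{k-l-1-q}.(-K_{X_0})^{\,l+1+q}$ is then positive, hence already $\zeta^{k-l-1-q}.(-K_{X_0})^{q}\ne0$, which is $NB(l,q)$ because $K_{X_0}^{q}=\pm(-K_{X_0})^{q}$. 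The numerical hypotheses $0\le k-l-3$ and $0\le 2l+3-k$ are exactly what makes the set of admissible $q$ non-empty and leaves, after the inheritance of $NB(l,q)$ along the centres of dimension $\le k-3-l$, a pair $(r,q)$ with $k>2r+2$ and $k-r-1-q>r+1$; Theorem~\ref{TheoremBrqCondition} (or Theorem~\ref{TheoremArqCondition} in the $A/NA$ variants) then concludes.

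\textbf{Part 2).} Here one works with the explicit intersection ring of $X_0=\mathbb{P}^{k_1}\times\cdots\times\mathbb{P}^{k_m}$: $NS_{\mathbb{R}}(X_0)=\bigoplus_i\mathbb{R}h_i$ with $h_i$ the pull-back of a hyperplane from the $i$-th factor, the nef cone is the positive orthant, $K_{X_0}=-\sum_i(k_i+1)h_i$ (so $X_0$ is Fano), and a monomial $\prod_i h_i^{a_i}$ is non-zero precisely when $a_i\le k_i$ for all $i$, these surviving monomials being linearly independent in $H^{*}(X_0)$. From this one reads off that a non-zero $\zeta=\sum_{i\in S}c_ih_i$ with $c_i\ne0$ and $S\ne\emptyset$ satisfies $\zeta^{N}\ne0$ iff $N\le\sum_{i\in S}k_i$, so that the failure of a non-vanishing condition at a given level forces $|S|\le1$, i.e. $\zeta$ proportional to a single $h_i$, hence rational. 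The hypotheses $2\le l\le k_1+k_2$ and $2l\le k+1$ in i), resp. $k_1\ge2$ in ii), are exactly the inequalities that make this forcing occur at the level needed to verify the strong form of $NA$ (resp. $NB$); one then applies the canonical-class promotion of Part 1) (whenever a positive $q$ is used), the inheritance (b) along the prescribed centres, and finally Theorem~\ref{TheoremArqCondition} in case i) and Theorem~\ref{TheoremBrqCondition} in case ii).

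\textbf{Main obstacle.} The delicate point is step (b): propagating the strong non-vanishing condition up an \emph{iterated} blow-up whose later centres may lie inside the exceptional divisors created earlier. A class on $X$ that pushes forward to zero on $X_0$ need not vanish, so one must show directly that the relevant power of such an ``exceptional'' class does not vanish; this is where the dimension bounds on the centres and the extra room afforded by the exponent $q$ — which lets the exceptional divisors occurring in $K_X=\pi^*K_{X_0}+\sum a_iE_i$, with all $a_i>0$, be absorbed — are genuinely used, and it is what allows this paper to treat centres of dimension up to roughly $\dim(X_0)-2$ rather than only about $(\dim(X_0)-2)/2$. The sharpness of the numerical hypotheses, and in particular that the inequalities on $l,k_1,k_2$ cannot be relaxed, is then exhibited by the explicit constructions, including Example~6, in Section~\ref{SectionBlowupsAndNonVanishingConditions}.
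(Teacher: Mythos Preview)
Your overall architecture is correct—establish a non-vanishing condition on $X_0$, promote it via the Fano hypothesis, propagate it through the tower by Theorem~\ref{TheoremEk}, then invoke Theorem~\ref{TheoremArqCondition} or~\ref{TheoremBrqCondition}—but in Part~1) you choose the wrong pair $(r,q)$, and this is not merely notational.

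You promote $NB(l,0)$ to $NB(l,q)$, i.e.\ to $\zeta^{k-l-1-q}.K_{X_0}^q\neq 0$, keeping the first index equal to $l$. But Theorem~\ref{TheoremBrqCondition} with $r=l$ needs $k>2l+2$; combined with the standing hypothesis $2l+3-k\geq 0$ this forces $k=2l+3$, so your argument only covers that single edge case. The paper instead takes $r=k-l-3$ and $q=2l+3-k$. The point of this choice is that $k-r-1-q=k-l-1$, so the condition $NB(r,q)$ on $X_0$ reads $\zeta^{\,k-l-1}.K_{X_0}^{\,2l+3-k}\neq 0$: the exponent on $\zeta$ is \emph{exactly} the one supplied by $NB(l,0)$, and the Fano hypothesis then furnishes the $K_{X_0}$-factor. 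With this $(r,q)$ the centres, having dimension $\leq k-l-3=r$, fall under Theorem~\ref{TheoremEk}~i), and the inequalities $k>2r+2$ (equivalent to $k\leq 2l+3$) and $k-r-1-q=r+2>r+1$ hold automatically. The same correction propagates to Part~2): there one applies Part~1) with $l$ replaced by $l'=k-l-1$, obtaining $r=l-2$ and $q=k-2l+1$, which is precisely why the hypotheses $l\geq 2$ and $2l\leq k+1$ appear.

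Two secondary remarks. First, your insistence on a ``strong form valid for all non-zero classes'' is unnecessary: Theorem~\ref{TheoremEk}~i) propagates the ordinary nef conditions directly, and the paper never invokes a non-nef version here. (Your argument for the multiprojective space does work for arbitrary classes, and the paper's version—using that nef classes have non-negative coefficients so the cross-terms in $\zeta^{k_1+k_2}$ are individually non-negative—is an equally valid alternative.) Second, your ``main obstacle'' paragraph misidentifies the role of $q$: it is not used to ``absorb'' the exceptional contributions $K_X=\pi^*K_{X_0}+\sum a_iE_i$ during propagation. Its role is purely arithmetic, shifting the label $(r,q)$ so that the centre-dimension bound $\dim V\leq r$, the exponent on $\zeta$, and the inequalities of Theorem~\ref{TheoremBrqCondition} can be met simultaneously; the propagation itself (Theorem~\ref{TheoremEk}~i)) is then a short dimension count.
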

For example, if $X_0=\mathbb{P}^{k_1}\times \mathbb{P}^{k_2}\times \ldots \times\mathbb{P}^{k_m}$ is a multi-projective space, where $m\geq 2$ and $dim(X_0)\geq 3$, and $X\rightarrow X_0$ is a finite composition of point-blowups then the automorphism group $Aut(X)$ has only finitely many connected components. Hence for any $f\in Aut(X)$ we have $h_{top}(f)=0$. In particular, the pseudo-automorphisms constructed in \cite{perroni-zhang} can never be automorphisms. 

Besides helping to check whether a given manifold can only have automorphisms of zero topological entropy, the above non-vanishing conditions also help in checking whether a given manifold must have interesting automorphisms in case it has automorphisms of positive entropies, or to give constraints to dynamical degrees of holomorphic maps of a given manifold in general. We illustrate this with a result concerning automorphisms which are cohomologically hyperbolic. We recall that a surjective holomorphic map $f:X\rightarrow X$ is cohomologically hyperbolic if it has one dynamical degree larger than other dynamical degrees. In particular, such an automorphism has positive topological entropy.

\begin{theorem}
Let $X_0$ be a projective hyper-K\"ahler manifold of dimension $k=2l$, and let $X\rightarrow X_0$ be a finite composition of blowups along smooth centers of dimension $\leq l-1$. Then any automorphism $f\in Aut(X)$ is either of zero topological entropy or is cohomologically hyperbolic with the dominant dynamical degree $\lambda _l(f)$.

Moreover, for any automorphism $f\in Aut(X)$ and $0\leq p\leq l$ we have $\lambda _{2l-p}(f)=\lambda _p(f)=\lambda _1(f)^p$. 

We can also allow the centers of blowups to have larger dimensions by using Theorem \ref{TheoremEk} and Corollary \ref{CorollaryExtremeCases}, see Examples 1-6 in Section \ref{SectionBlowupsAndNonVanishingConditions}.   

In the above we can also start from other manifolds of even dimensions, such as $X_0$ has Picard number $1$.
\label{TheoremHyperKahler}\end{theorem}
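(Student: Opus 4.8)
The plan is to derive everything from a single statement, which I will call the \emph{Key Claim}: for every $f\in Aut(X)$ and every integer $0\le p\le l$ one has $\lambda_p(f)=\lambda_1(f)^p$. Granting this, the rest is formal. If $\lambda_1(f)=1$, then $1\le\lambda_p(f)\le\lambda_1(f)^p=1$ for all $p$, so $h_{top}(f)=0$. If $\lambda_1(f)>1$, apply the Key Claim to $f$ and to $f^{-1}\in Aut(X)$, and use the Poincar\'e-duality identity $\lambda_{2l-p}(f)=\lambda_p(f^{-1})$ (adjointness of $f^*$ on $H^{2l-p,2l-p}$ and $(f^{-1})^*$ on $H^{p,p}$): the case $p=l$ gives $\lambda_1(f)^l=\lambda_l(f)=\lambda_l(f^{-1})=\lambda_1(f^{-1})^l$, hence $\lambda_1(f)=\lambda_1(f^{-1})$, and therefore $\lambda_{2l-p}(f)=\lambda_p(f)=\lambda_1(f)^p$ for all $0\le p\le l$. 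Since $\lambda_1(f)>1$, this makes $\lambda_l(f)$ the strict maximum among all the $\lambda_p(f)$, so $f$ is cohomologically hyperbolic with dominant dynamical degree $\lambda_l(f)$. Thus the dichotomy and the ``moreover'' statement both follow, and only the Key Claim remains.

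For the Key Claim I would argue as follows. The inequality $\lambda_p(f)\le\lambda_1(f)^p$ is free: log-concavity of $j\mapsto\log\lambda_j(f)$ (recalled in the introduction) together with $\lambda_0(f)=1$ makes the successive differences $\log\lambda_j(f)-\log\lambda_{j-1}(f)$ nonincreasing, hence all $\le\log\lambda_1(f)$, so $\log\lambda_p(f)\le p\log\lambda_1(f)$. For the reverse inequality, since $X$ is projective, $\lambda_1(f)$ is the spectral radius of $f^*$ on $NS_{\mathbb{R}}(X)$, and $f^*$ preserves the closed, salient, full-dimensional cone $Nef(X)\subset NS_{\mathbb{R}}(X)$; so by the Perron--Frobenius/Birkhoff theorem there is a nonzero $\eta\in Nef(X)$ with $f^*\eta=\lambda_1(f)\,\eta$. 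Then $f^*(\eta^p)=\lambda_1(f)^p\,\eta^p$ in $H^{p,p}(X)$, so whenever $\eta^p\neq 0$ we get $\lambda_p(f)\ge\lambda_1(f)^p$; and since $\eta^l=\eta^p\cdot\eta^{l-p}$, it suffices to prove $\eta^l\neq 0$.

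To obtain $\eta^l\neq 0$, the key input is that $X$ satisfies the non-vanishing condition $NA(l-1,0)$. The source of this is $X_0$: a projective hyper-K\"ahler manifold of dimension $2l$ satisfies the (unrestricted form of the) condition $NB(l-1,0)$ by Verbitsky's theorem \cite{verbitsky} quoted in the introduction, i.e.\ any nonzero class in $NS_{\mathbb{R}}(X_0)$ has nonzero $l$-th power. Since $X\to X_0$ is a finite composition of blowups along smooth centers of dimension $\le l-1$ — exactly one less than the exponent $l=k-(l-1)-1$ appearing in $NB(l-1,0)$, the ``extreme'' range — Theorem \ref{TheoremEk} together with Corollary \ref{CorollaryExtremeCases} transfers this to the condition $NA(l-1,0)$ on $X$. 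Now apply $NA(l-1,0)$ to the eigenclass $\eta$: either $\eta^l\neq 0$, and we are done, or $\eta$ is proportional to a class in $NS_{\mathbb{Q}}(X)$. In the second case $\lambda_1(f)$ is a rational eigenvalue of $f^*$ on the lattice $NS(X)$; but modulo torsion $f^*$ acts on $NS(X)$ as an invertible matrix over $\mathbb{Z}$, so its characteristic polynomial is monic over $\mathbb{Z}$ with constant term $\pm1$, and a rational root is then a rational integer dividing $\pm1$, forcing $\lambda_1(f)=1$. Hence if $\lambda_1(f)>1$ we are necessarily in the first case; and if $\lambda_1(f)=1$ the Key Claim holds trivially. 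This proves the Key Claim, hence the theorem.

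Finally, if in place of a hyper-K\"ahler $X_0$ one starts from a projective manifold $X_0$ of even dimension $2l$ with Picard number $1$, then by Theorem \ref{TheoremPicardNumber1} it satisfies $NB(l-1,0)$ (indeed $NB(r,0)$ for every $r\ge 0$), and the argument above goes through unchanged; the variant allowing blowup centers of larger dimension comes from feeding the sharper statements of Theorem \ref{TheoremEk} and Corollary \ref{CorollaryExtremeCases} into the step producing $NA(l-1,0)$ (see Examples 1--6 in Section \ref{SectionBlowupsAndNonVanishingConditions}). The hard part is entirely that step: propagating a Verbitsky-type non-vanishing across an iterated blowup in the extreme range of center dimensions, which forces one to track precisely how the exceptional divisor classes can enter products of total degree $l$ and why only an $A$-type (not $B$-type) conclusion survives. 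The dynamical part — the Perron--Frobenius eigenclass, the log-concavity estimate, and the integrality argument ruling out a rational $\lambda_1>1$ — is routine.
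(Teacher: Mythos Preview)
Your argument is correct and is essentially the paper's own proof: Verbitsky gives the non-vanishing on $X_0$, Theorem~\ref{TheoremEk} transfers it to $X$, and then the nef eigenclass/log-concavity/Poincar\'e-duality computation yields $\lambda_{2l-p}(f)=\lambda_p(f)=\lambda_1(f)^p$ and hence the dichotomy. One point to clean up: the propagation step is \emph{not} in the ``extreme range'' and does not require Corollary~\ref{CorollaryExtremeCases} --- since the centers have dimension $\le l-1=r$, this is precisely case~(i) of Theorem~\ref{TheoremEk}, which already propagates the full $NB(l-1,0)$ (not merely $NA(l-1,0)$) to $X$; so the passage you flag as ``the hard part'' is in fact the routine one, and your closing paragraph mischaracterizes where the difficulty lies.
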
 
Note that the case where $X=X_0=$ a compact hyper-K\"ahler manifold was proved in \cite{oguiso1}, where some specific examples were also given. Cohomologically hyperbolic automorphisms have been shown to have good dynamical properties, see the papers Cantat \cite{cantat}, Dinh-Sibony \cite{dinh-sibony3}, \cite{dinh-sibony4}, \cite{dinh-sibony5} and Dinh-deThelin \cite{dinh-deThelin} for more details. 

This paper is arranged as follows. In Section \ref{SectionBlowupsAndNonVanishingConditions}, we explore the question when a blowup preserves the non-vanishing conditions $A(r,q)$, $B(r,q)$, $NA(r,q)$ and $NB(r,q)$, and give many explicit examples at the end of the section. The proofs of Theorems \ref{TheoremArqCondition}, \ref{TheoremBrqCondition}, \ref{TheoremPicardNumber1}, \ref{TheoremAmpleCanonicalDivisor} and \ref{TheoremHyperKahler} are given in Section \ref{SectionProofsOfTheorems}. 

{\bf Acknowledgments.} We thank Keiji Oguiso for suggesting extending the results in dimension $3$ to higher dimensions and for helpful correspondences. We thank Igor Dolgachev for explaining several aspects of automorphism groups, and thank Mattias Jonsson and Turgay Bayraktar for helpful comments and for pointing out the reference \cite{paun} which helped to improve the paper. 

\section{Blowups and the non-vanishing conditions $A(r,q)$, $NA(r,q)$, $B(r,q)$ and $NB(r,q)$}
\label{SectionBlowupsAndNonVanishingConditions}

In this section we explore the question: if $Y$ is a projective manifold satisfying the non-vanishing condition $A(r,q)$ (correspondingly the non-vanishing conditions $NA(r,q)$, $B(r,q)$ and $NB(r,q)$), and $\pi :X\rightarrow Y$ is a blowup along a smooth submanifold $V\subset Y$, does $X$ also satisfy the non-vanishing condition $A(r,q)$ (respectively the non-vanishing conditions $NA(r,q)$, $B(r,q)$ and $NB(r,q)$)? We construct many examples of blowups when the answer to the question is Yes. The main idea is that if the normal vector bundle of $V$ in $Y$ is "{\bf negative enough}" and $V$ is "movable" (for the precise conditions see Theorem \ref{TheoremEk}, Lemmas \ref{LemmaP4} and \ref{LemmaManifoldDimension4}, and Corollary \ref{CorollaryExtremeCases}) then $X$ will also satisfy the non-vanishing conditions. Example 6 at the end of the section show that these assumptions of "negative normal vector bundle" and "movability of the center of blowup" $V$ can not be removed. To motivate the constructions, in the first two results we consider blowups of projective manifolds of dimension $4$, and after that will consider blowups of manifolds of arbitrary dimensions. At the end of the section we will give some explicit examples.  

For simplicity, for the results in this section we present the proofs only for the non-vanishing conditions $A(r,q)$ and $B(r,q)$. The proofs for algebraic non-vanishing conditions $NA(r,q)$ and $NB(r,q)$ are similar, using the following result: If $\pi :X\rightarrow Y$ is a birational morphism, then $\pi _*(NS(X))\subset NS(Y)$ and $\pi ^*(NS(Y))\subset NS(X)$ (see Example 19.1.6 in \cite{fulton}). Alternatively, we can see this by using $NS(X)=H^{2}(X,\mathbb{Z})\cap H^{1,1}(X)$ and $NS(Y)=H^{2}(Y,\mathbb{Z})\cap H^{1,1}(Y)$.

The first result of this section is for blowups of $\mathbb{P}^4$. See Theorem \ref{TheoremPicardNumber1} 1) for an extension to higher dimensions. 
\begin{lemma}
Let $V_1,\ldots ,V_n\subset \mathbb{P}^4$ be irreducible pairwise disjoint smooth compact complex submanifolds of dimension $\leq 2$. Let $\pi :X\rightarrow \mathbb{P}^4$ be the blowup of  $\mathbb{P}^4$ at $V_1,\ldots ,V_n$. If for any $j$ one of the following conditions are satisfied, then $X$ satisfies the non-vanishing condition $A(0,0)$

i) $dim(V_j)\leq 1$.

ii) $dim(V_j)=2$, and $V_j$ is a complete intersection of two smooth hypersurfaces $D_1$ and $D_2$ of degrees $d_1$ and $d_2$. For example, we may choose $V_j$ to be the intersection between a smooth hypersurface and a generic hyperplane. By Bertini's theorem, such a $V_j$ is smooth.  

(Note that since the manifolds $V_1,\ldots ,V_n$ are pairwise disjoint, there is at most one of them of dimension $2$.)
\label{LemmaP4}\end{lemma}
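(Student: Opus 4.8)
The plan is to reduce the condition $\zeta^3=0$ to a system of one-variable polynomial equations, one for each blown-up center, and to check that each of these equations forces the corresponding coefficient to be a rational multiple of a fixed one.

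\emph{Setup.} Since $\mathbb{P}^4$ has $h^{2,0}=0$ and blowing up along a smooth center does not change $h^{2,0}$, we have $H^{1,1}(X)=H^2(X,\mathbb{R})=NS_{\mathbb{R}}(X)$; and because the $V_j$ are pairwise disjoint, $H^{1,1}(X)=\mathbb{R}h\oplus\bigoplus_{j=1}^n\mathbb{R}E_j$, where $h=\pi^*\mathcal{O}_{\mathbb{P}^4}(1)$, $E_j$ is the exceptional divisor over $V_j$, and $E_i\cdot E_j=0$ for $i\ne j$. So $A(0,0)$ and $NA(0,0)$ coincide here, and writing an arbitrary nef class as $\zeta=ah-\sum_j b_jE_j$, it suffices to show that $\zeta^3=0$ forces either $\zeta=0$ or all ratios $b_j/a$ to be rational (in which case $\zeta$ is proportional to a class in $H^{1,1}(X,\mathbb{Q})$).

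\emph{The reduction.} From $\zeta^3=0$ in $H^{3,3}(X)$ I only need the consequences $\zeta^3\cdot E_j=0$. Restricting to $E_j=\mathbb{P}(N_{V_j/\mathbb{P}^4})$ with bundle map $\rho_j:E_j\to V_j$, one has $h|_{E_j}=\rho_j^*(\mathcal{O}_{\mathbb{P}^4}(1)|_{V_j})$ and $E_j|_{E_j}=-\xi_j$ with $\xi_j=c_1(\mathcal{O}_{\mathbb{P}(N)}(1))$ relatively ample. The decisive point is that $h^3\cdot E_j=\int_{E_j}(h|_{E_j})^3=0$, because $\dim V_j\le 2$ forces $(\mathcal{O}_{\mathbb{P}^4}(1)|_{V_j})^3=0$; expanding, $\zeta^3\cdot E_j=b_j\cdot Q_j(a,b_j)$, where $Q_j$ is at most quadratic in $b_j$ with coefficients explicit in $a$ and in the Chern numbers of $N_{V_j/\mathbb{P}^4}$ (obtained from $h^2E_j^2$, $hE_j^3$, $E_j^4$ via the Grothendieck relation and the Segre identities for $\mathbb{P}(N)$). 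Hence for each $j$, either $b_j=0$ or $Q_j(a,b_j)=0$.

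\emph{Case analysis of $Q_j$.} If $V_j$ is a point, $E_j\cong\mathbb{P}^3$ with $E_j|_{E_j}=\mathcal{O}(-1)$, so $Q_j$ is a nonzero multiple of $b_j^2$ and $b_j=0$. If $\dim V_j=1$, then also $h^2E_j^2=0$, so $Q_j(a,b_j)=3a\,\gamma_j-b_j\delta_j$ is linear, with $\gamma_j=hE_j^3=\deg_{\mathbb{P}^4}V_j$ and $\delta_j=E_j^4=\deg c_1(N_{V_j/\mathbb{P}^4})=5\deg V_j-2+2g(V_j)\ne0$ by adjunction on $\mathbb{P}^4$; hence $b_j\in\{0,\,3a\gamma_j/\delta_j\}$, a rational multiple of $a$. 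If $\dim V_j=2$ and $V_j$ is a complete intersection of hypersurfaces of degrees $d_1,d_2$, then $N_{V_j/\mathbb{P}^4}=\mathcal{O}_{V_j}(d_1)\oplus\mathcal{O}_{V_j}(d_2)$ and $\int_{V_j}(\mathcal{O}_{\mathbb{P}^4}(1)|_{V_j})^2=d_1d_2$, and a direct computation gives $Q_j(a,b_j)=d_1d_2\big[(d_1^2+d_1d_2+d_2^2)b_j^2-3(d_1+d_2)a\,b_j+3a^2\big]$, whose quadratic factor has discriminant $-3a^2(d_1-d_2)^2\le0$; so if $a\ne0$ it has no real root unless $d_1=d_2$, in which case its unique double root $b_j=a/d_1$ is rational, and again $b_j$ is $0$ or a rational multiple of $a$. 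Finally, if $a=0$ every one of these equations forces $b_j=0$, so $\zeta=0$. In all cases $\zeta$ is proportional to a rational class, which is exactly $A(0,0)$ (and $NA(0,0)$).

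\emph{Main obstacle.} The only real work is the intersection-number bookkeeping on the exceptional divisors — fixing conventions for $\mathbb{P}(N)$, the Grothendieck relation and the twist under $N\mapsto N\otimes L$, and correctly invoking adjunction on $\mathbb{P}^4$ — and the computation that makes the complete-intersection surface case succeed, namely that the discriminant of $Q_j$ equals the manifestly non-positive quantity $-3a^2(d_1-d_2)^2$. (Note that nef-ness of $\zeta$ is in fact not used; one could even drop it in the statement.)
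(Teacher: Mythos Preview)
Your argument is correct and follows essentially the same route as the paper's proof: write $\zeta$ in the basis $h,E_1,\ldots,E_n$, use $\zeta^3\cdot E_j=0$ together with $h^3|_{E_j}=0$ to isolate a polynomial equation in the single variable $b_j$, and then analyze that equation via the Grothendieck relation/Chern numbers of $N_{V_j/\mathbb{P}^4}$ in each of the three dimensional cases, with the surface case reducing to a quadratic whose discriminant is $-3a^2(d_1-d_2)^2$. The only cosmetic differences are that you compute the needed intersection numbers via Segre pushforwards and handle $a=0$ directly (thereby avoiding any appeal to nef-ness), whereas the paper normalizes $a=1$ using nef-ness and compares with the defining relation of $H^*(E_j)$; also, in the curve case you verify $E_j^4=5\deg V_j-2+2g(V_j)\neq 0$ by adjunction, while the paper infers the nonvanishing of $c_1(N_{V_j/\mathbb{P}^4})$ a posteriori from the equation itself.
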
 
\begin{proof}
Let $H\subset \mathbb{P}^4$ be a hyperplane and let $E_1,\ldots ,E_n$ be the exceptional divisors. 

Let $\zeta$ be a nef class on $X$ with $\zeta ^3=0$, we will show that $\zeta$ is proportional to a rational cohomology class. We can write
\begin{eqnarray*} 
\zeta =aH+\sum _{j=1}^nb_jE_j,
\end{eqnarray*}
for real numbers $a$ and $b_i$. Since $\zeta$ is nef, either $a>0$ or $\zeta =0$. If $\zeta =0$ then we are done. Hence we now assume that $a\not= 0$, and upon dividing by $a$ can write
\begin{eqnarray*}
\zeta =H+\sum _{j=1}^nb_jE_j
\end{eqnarray*}
where $b_j\leq 0$. The conclusion of the lemma is equivalent to that $b_1,b_2,\ldots ,b_n$ are rational numbers. 

By assumption we have $\zeta ^3.E_j=0$ for any $j=1,\ldots ,n$. Since $V_1,\ldots ,V_n$ are disjoint, it follows that $E_i.E_j=0$ for $i\not= j$. Hence $\zeta ^3.E_j=(H+b_jE_j)^3.E_j$ for any $j$. 

Let $\mathcal{E}_j=N_{V_j/\mathbb{P}^4}$ be the normal vector bundle of $V_j$ in $\mathbb{P}^4$. Then $E_j=\mathbb{P}(\mathcal{E}_j)$. Let $c_1(\mathcal{E}_j)$ and $c_2(\mathcal{E}_j)$ be the first and second Chern classes of $\mathcal{E}_j$ (the higher Chern classes vanish since $\mathcal{E}_j$ is the vector bundle over a manifold of dimension $\leq 2$), and let $\pi _j:E_j\rightarrow V_j$ be the projections. Let $h_j=\pi ^*(H)|_{E_j}=\pi _j^*(H|_{V_j})$, and $e_j=E_j|_{E_j}$. Since $h_j^3=\pi _j^*(H^3|_{E_j})=0$, the equality $(H+b_jE_j)^3.E_j=0$ becomes
\begin{equation}
b_j^3e_j^3+3b_j^2h_j.e_j^2+3b_jh_j^2.e_j=0,
\label{Equation1}\end{equation}
in $H^*(E_j)$.

We consider three cases:

Case 1: $V_j=$ a point. In this case $\mathcal{E}_j$ has rank $4$, $c_1(\mathcal{E}_j)=0$ and $c_2(\mathcal{E}_j)$, and we know that (see Remark 3.2.4 in Fulton's book \cite{fulton})
\begin{eqnarray*} 
e_j^4=0.
\end{eqnarray*}
Moreover, we know that $H^*(E_j)$ is generated by $e_j$ as an algebra over $H^*(V_j)$, with the defining relation $e_j^4=0$. Therefore from Equation (\ref{Equation1}) we see that $b_j=0$ hence is a rational number. 

Case 2: $V_j=$ a curve. In this case $\mathcal{E}_j$ has rank $3$,  and $c_2(\mathcal{E}_j)$, and we know that (see Remark 3.2.4 in Fulton's book \cite{fulton})
\begin{eqnarray*} 
e_j^3-\pi _j^*c_1(\mathcal{E}_j)e_j^2=0.
\end{eqnarray*}
Moreover, we know that $H^*(E_j)$ is generated by $e_j$ as an algebra over $H^*(V_j)$, with the defining relation $e_j^3-\pi _j^*c_1(\mathcal{E}_j)e_j^2=0$. If $b_j=0$ then it is a rational number and we are done. If $b_j\not= 0$, dividing Equation (\ref{Equation1}) by $b_j^3$ and compare to the defining equation of $H^*(E_j)$, we find
\begin{eqnarray*}
\frac{3}{b_j}h_j=-\pi _j^*c_1(\mathcal{E}_j).    
\end{eqnarray*}
Since $h_j=\pi _j^*(H|_{V_j})=\pi _j^*(deg(V_j))$ and $\pi _j^*:H^*(V_j)\rightarrow H^*(E_j)$ is injective, we have
\begin{eqnarray*}
\frac{3}{b_j}deg(V_j)=-c_1(\mathcal{E}_j).
\end{eqnarray*}
Because $deg(V_j)$ is a positive integer, it follows that the integer $c_1(\mathcal{E}_j)$ is non-zero, and hence 
\begin{eqnarray*} 
b_j=-\frac{3deg(V_j)}{c_1(\mathcal{E}_j)}\in \mathbb{Q}
\end{eqnarray*}
as wanted. 

Case 3: $V_j=$ a surface, and is a complete intersection of two hypersurfaces of degrees $d_1$ and $d_2$. In this case $\mathcal{E}_j$ has rank $2$, and we know that (see Remark 3.2.4 in Fulton's book \cite{fulton})
\begin{eqnarray*} 
e_j^2-\pi _j^*c_1(\mathcal{E}_j)e_j+\pi _j^*c_2(\mathcal{E}_j)=0.
\end{eqnarray*}
Moreover, we know that $H^*(E_j)$ is generated by $e_j$ as an algebra over $H^*(V_j)$, with the defining relation $e_j^2-\pi _j^*c_1(\mathcal{E}_j)e_j+\pi _j^*c_2(\mathcal{E}_j)=0$.  If $b_j=0$ then it is rational and we are done. Hence we can assume that $b_j\not= 0$. Dividing Equation \ref{Equation1} by $b_j^3$ and defining $a_j=1/b_j$, we find $e_j^3+3a_jh_j.e_j^2+3a_j^2h_j^2.e_j=0$.  Because
\begin{eqnarray*}
&&e_j^3+3a_jh_j.e_j^2+3a_j^2h_j^2.e_j\\
&=&[e_j^2-\pi _j^*c_1(\mathcal{E}_j)e_j+\pi _j^*c_2(\mathcal{E}_j)]\times [e_j+3a_jh_j+\pi _j^*c_1(\mathcal{E}_j)e_j]\\
&&+[3a_j^2h_j^2+3a_jh_j\pi _j^*(c_1(\mathcal{E}_j))+\pi _j^*(c_1(\mathcal{E}_j)^2)-\pi _j^*(c_2(\mathcal{E}_j))]\times e_j,
\end{eqnarray*}
we deduce
\begin{eqnarray*}
3a_j^2h_j^2+3a_jh_j\pi _j^*(c_1(\mathcal{E}_j))+\pi _j^*(c_1(\mathcal{E}_j)^2)-\pi _j^*(c_2(\mathcal{E}_j))=0.
\end{eqnarray*}
Because $\pi _j^*:H^j(V_j)\rightarrow H^*(E_j)$ is injective, we obtain
\begin{equation}
3a_j^2(H^2|_{V_j})+3a_j(H|_{V_j})c_1(\mathcal{E}_j)+c_1(\mathcal{E}_j)^2-c_2(\mathcal{E}_j)=0.
\label{Equation2}\end{equation}
We now use the explicit values of the Chern classes. Example 3.2.12 in \cite{fulton} gives that $c(\mathcal{E}_j)=(1+d_1H|_{V_j})(1+d_2H|_{V_j})$, hence $c_1(\mathcal{E}_j)=(d_1+d_2)H|_{V_j}$ and $c_2(\mathcal{E}_j)=d_1d_2(H^2|_{V_j})$. Therefore $c_1(\mathcal{E}_j)^2=(d_1+d_2)^2(H^2|_{V_j})$. From this, upon dividing Equation (\ref{Equation2}) by the integer number $(H^2|_{V_j})=deg(V_j)>0$, we obtain the equation
\begin{eqnarray*}
3a_j^2+3(d_1+d_2)a_j+(d_1+d_2)^2-d_1d_2=0.
\end{eqnarray*}
Using the quadratic formula, we see that the above has real solutions if and only if $d_1=d_2=d$ a positive integer, and in that case it has a repeated root $a_j=-d$ and hence $b_j=-1/d$ is a rational number, as wanted.  
\end{proof} 
 
The next result considers blowups of general projective manifolds of dimension $4$.
\begin{lemma}
Let $Y$ be a projective manifold of dimension $4$, and $V\subset Y$ be an irreducible compact complex submanifold. Let $\pi :X\rightarrow Y$ be the blowup of $Y$ at $V$, and let $E$ be the exceptional divisor. Assume that $Y$ satisfies the non-vanishing condition $A(0,0)$ (correspondingly the non-vanishing conditions   $NA(0,0)$, $B(0,0)$ and $NB(0,0)$). If either one of the following three conditions is satisfied, then $X$ also satisfies the non-vanishing condition $A(0,0)$ (respectively the non-vanishing conditions $NA(0,0)$, $B(0,0)$ and $NB(0,0)$). 

(i) $V$ is a point.

(ii) $V$ is a curve. In this case let $\mathcal{E}=N_{V/Y}$ be the normal vector bundle of $V$ in $Y$ and $c_1(\mathcal{E})$ be the first Chern class of $\mathcal{E}$. We then assume that $c_1(\mathcal{E})<0$ and $V$ is not the only effective cycle (with real coefficient) in its cohomology class.

(iii) $V$ is a surface. In this case we assume that $\pi _*(E^3)$ can be represented by an effective curve (with real coefficients) whose intersection with $V$ has dimension $\leq 0$, $V$ can be represented by an effective cycle (with real coefficients) intersecting properly with $V$, and $E^4<0$. 
  
Note that the conditions i), ii) and iii) can be stated in a uniform manner, see the proof of this lemma and see also Theorem \ref{TheoremEk}.    
\label{LemmaManifoldDimension4}\end{lemma}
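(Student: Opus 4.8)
\medskip

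The plan is to mimic the structure of the proof of Lemma \ref{LemmaP4}, but now working over a general base $Y$ rather than $\mathbb{P}^4$, so that we can no longer compute intersection numbers on $E$ explicitly. Instead, we will use the hypotheses on movability and negativity to force the relevant coefficient to be rational. First I would set up the notation: write a nef class on $X$ as $\zeta = \pi^*(\eta) + bE$ for some nef-ish class $\eta$ on $Y$ (more precisely, $\eta = \pi_*(\zeta)$, which is nef since $\pi_*$ preserves nef classes for a birational morphism) and $b \in \mathbb{R}$. The goal in the $A(0,0)$ case is: assuming $\zeta^3 = 0$, show $\zeta$ is proportional to a rational class; since $\pi^*$ is injective on cohomology and $Y$ satisfies $A(0,0)$, it suffices to show both $\eta$ is proportional to a rational class \emph{and} $b$ is rational (or $b/$(scaling of $\eta$) is rational). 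The key first reduction: projecting the identity $\zeta^3 = 0$ against suitable test classes. Intersecting $\zeta^3$ with $\pi^*$ of an ample class on $Y$ kills the $E$-terms by the projection formula (since $\pi_*(E \cdot \pi^*(\cdot)) = 0$ in the right degrees) and leaves $\eta^3 \cdot (\text{ample}) = 0$ type relations, which combined with nefness of $\eta$ and $A(0,0)$ on $Y$ pins down $\eta$ up to rational proportionality. So the real content is extracting rationality of $b$.

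\medskip

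For that I would intersect $\zeta^3 = 0$ against $E$, exactly as in Lemma \ref{LemmaP4}: using $\zeta^3 \cdot E = (\pi^*\eta + bE)^3 \cdot E = 0$ and expanding, one gets a relation of the form
\begin{equation*}
3b\,(\pi^*\eta)^2 \cdot E^2 + 3b^2\,\pi^*\eta \cdot E^3 + b^3\,E^4 + (\pi^*\eta)^3\cdot E = 0,
\end{equation*}
where $(\pi^*\eta)^3 \cdot E = 0$ by the projection formula. Now I would push forward to $Y$: $\pi_*\big((\pi^*\eta)^2 \cdot E^2\big)$, $\pi_*(\pi^*\eta \cdot E^3)$, and $\pi_*(E^4)$ are classes on $Y$, and using the standard Segre-class description of $E$ as the projectivized normal bundle $\mathbb{P}(\mathcal{E})$ (Remark 3.2.4 and \S3.2 of Fulton \cite{fulton}) these push-forwards can be expressed via Chern classes of $\mathcal{E} = N_{V/Y}$ and the restriction maps. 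This is where cases (i)--(iii) diverge exactly as Cases 1--3 in Lemma \ref{LemmaP4}: when $V$ is a point the relation collapses to $b = 0$; when $V$ is a curve, dividing by $b^3$ (assuming $b \neq 0$) and comparing with the defining relation $e^3 = \pi_V^*c_1(\mathcal{E})\, e^2$ in $H^*(E)$ yields $\tfrac{3}{b}(\pi^*\eta|_E$-restriction$) = -\pi_V^*c_1(\mathcal{E})$; intersecting against an ample class on $V$ and using that the intersection number of $\eta$ with $V$ is a \emph{positive integer} (here movability of $V$ — $V$ not being the only effective cycle in its class — is what guarantees this number is nonzero and the restriction of $\eta$ to $V$ is not killed) together with $c_1(\mathcal{E}) < 0$ being a nonzero integer, we solve $b = -3(\eta \cdot V)/(c_1(\mathcal{E})\cdot(\text{something}))\in\mathbb{Q}$; when $V$ is a surface, the same manipulation using the rank-$2$ relation $e^2 - \pi_V^*c_1(\mathcal{E})e + \pi_V^*c_2(\mathcal{E}) = 0$ produces a quadratic equation in $a = 1/b$ with coefficients that are intersection numbers on $Y$, and the hypotheses ($\pi_*(E^3)$ effective meeting $V$ in dimension $\le 0$, $V$ movable and meeting $V$ properly, $E^4 < 0$) are precisely what is needed to make this quadratic have rational — in fact forced — solutions, just as the discriminant analysis at the end of Lemma \ref{LemmaP4} forced $d_1 = d_2$ and a repeated root.

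\medskip

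The $B(0,0)$ version is handled in parallel but is easier: assuming $\zeta \neq 0$ is nef, we must show $\zeta^3 \neq 0$. If $\zeta^3 = 0$, run the $A(0,0)$ argument to conclude $\zeta$ is proportional to a rational class, then clear denominators so $\zeta \in NS(X)$ is nef and nonzero with $\zeta^3 = 0$; push forward to get $\eta = \pi_*(\zeta)$ nef on $Y$. If $\eta \neq 0$, then $B(0,0)$ on $Y$ gives $\eta^3 \neq 0$, but $\eta^3 = \pi_*(\zeta^3) + (\text{correction from }E)$; a projection-formula computation shows $\eta^3 = \pi_*(\zeta^3)$ exactly when $\zeta - \pi^*\eta$ is supported on $E$ in the right way, giving $\eta^3 = 0$, a contradiction. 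If instead $\eta = 0$, then $\zeta = bE$ with $b \neq 0$, and $\zeta^3 = b^3 E^3$; pairing against an ample class forces $E^3 \neq 0$ as a nef multiple of $E$ cannot have vanishing cube unless $E$ is not big, which contradicts $E$ being $\pi$-exceptional — this is the step where the movability/negativity hypotheses ((ii): $c_1(\mathcal{E})<0$; (iii): $E^4<0$ and $\pi_*(E^3)$ effective) are reused. The main obstacle I anticipate is case (iii): unlike the point and curve cases, where the exceptional divisor relation is low-degree and the conclusion is nearly immediate, the surface case requires juggling three separate push-forward classes on $Y$ and checking that the resulting quadratic in $1/b$ genuinely has all-rational roots; getting the signs right in the discriminant (so that the hypotheses $E^4 < 0$, effectivity of $\pi_*(E^3)$, and properness of self-intersection of $V$ exactly rule out irrational roots) is the delicate bookkeeping, and it is essentially the general-$Y$ shadow of the ``$d_1 = d_2$'' miracle in Lemma \ref{LemmaP4}. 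I would also need to be careful that ``$\pi_*$ preserves nef classes'' and the various injectivity statements for $\pi^*$ on $H^{1,1}$ and $NS$ are invoked correctly, but those are standard (and the $NA$/$NB$ versions follow verbatim using $\pi^*NS(Y)\subset NS(X)$ and $\pi_*NS(X)\subset NS(Y)$ as noted before the lemma).
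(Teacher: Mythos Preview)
Your approach has a genuine gap. You are following the template of Lemma~\ref{LemmaP4} too literally: there the class was written as $H + \sum b_j E_j$ with $H$ the \emph{integral} hyperplane class on $\mathbb{P}^4$, so the polynomial relation for each $b_j$ had rational coefficients and one could argue about rationality of its roots. In the present lemma the base $Y$ is arbitrary, so after writing $\zeta = \pi^*(\eta) + bE$ the class $\eta = \pi_*(\zeta)$ is only a \emph{real} nef class. The relation you extract from $\zeta^3\cdot E = 0$ therefore has real coefficients (numbers like $\eta^2\cdot V$ and $\eta\cdot\pi_*(E^3)$), and no discriminant analysis can force $b$ to be rational. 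Your claim in case (iii) that ``the hypotheses \ldots are precisely what is needed to make this quadratic have rational --- in fact forced --- solutions'' is unjustified: those hypotheses are \emph{positivity} conditions (effectivity, movability, $E^4<0$) and carry no rationality information whatsoever. The ``$d_1=d_2$ miracle'' at the end of Lemma~\ref{LemmaP4} was specific to complete intersections in $\mathbb{P}^4$ and has no analogue here. Likewise in your case (ii), $\eta\cdot V$ is a real number, not an integer, so you cannot solve for $b\in\mathbb{Q}$. Your preliminary step is also wrong: intersecting $\zeta^3$ with $\pi^*(\text{ample})$ does \emph{not} kill the $E$-terms when $\dim V\geq 1$, since $\pi_*(E^2)$ and $\pi_*(E^3)$ are nonzero, so you do not get $\eta^3=0$ that way.

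The paper's argument is entirely different and never attempts to show $b$ is rational. Writing $\zeta = \pi^*(\xi) - aE$ with $a\geq 0$ (nefness forces the sign), one intersects $\zeta^3=0$ with $E$ to obtain
\[
a^3 E^4 \;=\; -3a\,\pi^*(\xi^2)\cdot E^2 \;+\; 3a^2\,\pi^*(\xi)\cdot E^3.
\]
The movability hypotheses in (ii) and (iii) are used, via approximation of $\xi$ by smooth K\"ahler forms and intersection with positive closed currents supported off $V$, to show that both terms on the right are \emph{non-negative}; the hypothesis $E^4<0$ (equivalently $c_1(\mathcal{E})<0$ in case (ii)) makes the left side \emph{negative} whenever $a>0$. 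This sign contradiction forces $a=0$, whence $\zeta=\pi^*(\xi)$; then $\xi$ is nef on $Y$ by Paun~\cite{paun}, $\xi^3=0$, and the non-vanishing condition on $Y$ finishes. The $B(0,0)$ case is the same positivity argument verbatim, not a detour through rationality.
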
 
\begin{proof} (See also the proof of Theorem 2 in \cite{truong} for blowups of $\mathbb{P}^3$).

(iii) Assume that the condition (iii) is satisfied and $Y$ satisfies the non-vanishing condition $A(0,0)$ (the case when $Y$ satisfies the non-vanishing condition $B(0,0)$ is similar; the algebraic analogs $NA(0,0)$ and $NB(0,0)$ are also similar by observing that Neron-Severi groups are preserved by pushing forward by $\pi$). Let $\zeta $ be a nef class on $X$ such that $\zeta ^3=0$. We then show that $\zeta$ is proportional to  a rational cohomology class. We can write $\zeta =\pi ^*(\xi )-aE$ for some $a\geq 0$ and $\xi =\pi _*(\zeta )\in H^{1,1}(Y)$. Then 
\begin{equation}
0=\zeta ^3=\pi ^*(\xi ^3)-3a\pi ^*(\xi ^2).E+3a^2\pi ^*(\xi ).E^2-a^3E^3. 
\label{Equation3}\end{equation} 
Intersecting Equation (\ref{Equation3}) with $E$, and using $\pi _*(E)=0$ and $\pi _*(E.E)=-V$ (see Section 4.3 in \cite{fulton}) we find that
\begin{equation}
a^3E^4=-3a\pi ^*(\xi ^2).E^2+3a^2\pi ^*(\xi ).E^3.
\label{Equation4}\end{equation}
The assumptions imply that $-\pi ^*(\xi ^2).E^2,\pi ^*(\xi ).E^3\geq 0$ are psef. In fact, approximating $\zeta$ by K\"ahler classes, we may assume without loss of generality that $\zeta$ is represented by a positive closed smooth $(1,1)$ form. Then $\xi =\pi _*(\zeta )$ can be represented by a positive closed $(1,1)$ form smooth out of $V$. Since $codim(V)=2$, it follows (see e.g. Section 4 Chapter 3 in the book Demailly \cite{demailly}) that $\xi .\xi $ is represented by a positive closed $(2,2)$ form smooth out of $V$. Then $-\pi ^*(\xi .\xi ).E.E=\xi .\xi .V$ must be non-negative, since by assumptions we can find an effective cycle $V'$ of dimension $2$ in the cohomology class of $V$ and intersect properly with $V$. Then by the results in \cite{demailly} again, we have
\begin{eqnarray*}
 \xi .\xi .V=\xi .\xi .V'
\end{eqnarray*} 
can be represented by a non-negative measure, therefore is non-negative as wanted. Similarly we have $\pi ^*(\xi ).E^3\geq 0$. 

From the above we must have $a=0$. Otherwise, $a>0$ and we obtain a contradiction
\begin{eqnarray*}
0>a^3E^4=-3a\pi ^*(\xi ^2).E^2+3a^2\pi ^*(\xi ).E^3\geq 0.
\end{eqnarray*}

Thus $\zeta =\pi ^*(\xi )$, and by a result of Paun (see \cite{paun})  $\xi$ is itself nef. Then since $\zeta ^3=0$, it follows that $\xi ^3=0$. Because $Y$ satisfies non-vanishing condition A and $\xi$ is nef on $Y$ as shown above, if follows that $\xi $ is proportional to a rational cohomology class and so is $\zeta =\pi ^*(\xi )$.

ii) Assume that the assumption ii) is satisfied. We first observe that the condition $c_1(\mathcal{E})<0$ is equivalent to $E^4<0$. In fact, if $e=E|_E$ and $\pi _E:E\rightarrow V$ is the induced map, we know that
\begin{eqnarray*} 
E^4=e^3=\pi _E^*(c_1(\mathcal{E}))e^2.
\end{eqnarray*}
Now $\pi _E^*(c_1(\mathcal{E}))=c_1(\mathcal{E})\mathbb{P}^2$, where $\mathbb{P}^2$ is a fiber of the map $\pi _E$. Since $E|_E$ is the tautological bundle, it follows that $e.\mathbb{P}^2=-\mathbb{P}^1$ and $e.e.\mathbb{P}^2=-e.\mathbb{P}^1=1$. Therefore $E^4=c_1(\mathcal{E})<0$. 

Thus the assumptions in ii) can be restated as follows: Here $3=4-1$ is the codimension of $V$ in $Y$, $(-1)^{3-1}\pi _*(E^3)=V$ can be represented by an effective cycle (with real coefficients) whose intersection with $V$ has dimension $\leq 0$, and $(-1)^{4-1}\pi _*(E^4)$ is a positive number. Stated this way, we can see that the statements of ii) and iii) are similar. Given this, the proof of ii) is similar to that of iii), and hence is omitted.  

i) The proof is similar to those of iii) and ii) above. 

\end{proof} 
 
The next result concerns blowups of projective manifolds of arbitrary dimension. Part (i) of Theorem \ref{TheoremEk} below refines the results in the papers \cite{bayraktar} and \cite{bayraktar-cantat} (see the remark right after the statement of the theorem). Examples satisfying parts ii), ii') and iii') of Theorem \ref{TheoremEk} are given at the end of this section. In Example 6 at the end of this section, we show that the assumptions in Theorem \ref{TheoremEk} ii') (and those of Lemmas \ref{LemmaP4} and \ref{LemmaManifoldDimension4} and Corollary \ref{CorollaryExtremeCases}) can not be removed.   

\begin{theorem}
Let $Y$ be a projective manifold of dimension $k$, and let $V\subset Y$ be a compact complex submanifold. Let $\pi :X\rightarrow Y$ be the blowup of $Y$ at $V$, and let $E$ be the exceptional divisor. Let $r$ and $q$ be integers with $-1\leq r\leq k-1$ and $0\leq q\leq k-r-1$. Assume that $Y$ satisfies the non-vanishing condition $A(r,q)$ (correspondingly the non-vanishing conditions $NA(r,q)$, $B(r,q)$ and $NB(r,q)$). If one of the following four conditions is satisfied, then $X$ also satisfies the non-vanishing condition $A(r,q)$ (respectively the non-vanishing condition $NA(r,q)$, $B(r,q)$ and $NB(r,q)$).

i) We assume that the dimension of $V$ is $dim(V)\leq r$.  
 
ii) Assume that $q=0$ and the dimension of $V$ is $>r$. Let $s=$codimension of $V$ in $Y$, then $s\leq k-r-1$. We assume that for any $j=s,s+1,\ldots ,k-r-1$, the cycle $(-1)^{j-1}\pi _*(E^j)$ can be represented by an effective cycle (with real coefficients) whose intersect with $V$ has dimension $\leq k-j-1$; and the cycle $(-1)^{k-r-1}\pi _*(E^{k-r})$ is strictly effective, i.e. it is effective and non-zero. Note that the cycles $(-1)^{j-1}\pi _*(E^j)$ can be represented in terms of the Chern classes of $\mathcal{E}$, see the Remark after the proof of the theorem.  

When the center of blowup $V$ has Picard number $1$ (e.g. a curve or a projective space), the assumptions in ii) and iii) can be less restrictive, in that we do not require all the effective cycles $(-1)^{j-1}\pi _*(E^j)$ to have intersections of small enough dimensions with $V$.   

ii') Assume that $q=0$, the dimension of $V$ is $>r$, and $H^{1,1}(V)$ has dimension $1$ (or $V$ has Picard number $1$ for the algebraic non-vanishing conditions $NA(r,0)$ and $NB(r,0)$). Let $s=$codimension of $V$ in $Y$, then $s\leq k-r-1$. We assume that for any $j=s,s+1,\ldots ,k-r-1$, the cycle $(-1)^{j-1}\pi _*(E^j)$ is effective; and the cycle $(-1)^{k-r-1}\pi _*(E^{k-r})$ is strictly effective, i.e. it is effective and non-zero. We also assume that $V$ is not the only effective cycle in its cohomology class. 

iii') Assume that $q>0$, the dimension of $V$ is $>r$, and $H^{1,1}(V)$ has dimension $1$ (or $V$ has Picard number $1$ for the algebraic non-vanishing conditions $NA(r,q) $ and $NB(r,q)$). We assume that the other requirements in ii') are satisfied. In addition we assume that $c_1(Y)|_V$ is ample in $V$.  

\label{TheoremEk}\end{theorem}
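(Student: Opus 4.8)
The plan is to treat all four conditions in a unified way, reducing each to the non-vanishing condition on $Y$ by analyzing a nef class $\zeta$ on $X$ with $\zeta^{k-r-1-q}.K_X^q = 0$. First I would write $\zeta = \pi^*(\xi) - aE$ where $\xi = \pi_*(\zeta) \in H^{1,1}(Y)$ and $a \geq 0$ (using that $\zeta$ is nef and $-E$ is $\pi$-nef), and expand the vanishing condition. For case (i), where $\dim(V) \leq r$, the key observation is that any monomial in the expansion containing $E$ to a positive power, when pushed down, gives a class supported on $V$, and intersecting $k-r-1-q$ divisor classes with $K_X^q$ on a cycle of dimension $\leq r$ forces those terms to vanish for dimension reasons; more carefully one shows $a = 0$ directly, so $\zeta = \pi^*(\xi)$, then invokes Paun's result (as in Lemma \ref{LemmaManifoldDimension4}) that $\xi$ is itself nef on $Y$, and concludes by the non-vanishing condition $A(r,q)$ (resp.\ $B$, $NA$, $NB$) on $Y$ that $\xi$, hence $\zeta$, is proportional to a rational class.

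For cases (ii), (ii'), (iii'), where $\dim(V) > r$ and (for ii', iii') $V$ has Picard number one, I would follow the positivity argument of Lemma \ref{LemmaManifoldDimension4}(iii). Approximating $\zeta$ by Kähler classes, $\xi = \pi_*(\zeta)$ is represented by a positive closed $(1,1)$-form smooth off $V$; since $V$ has codimension $s$, the self-intersections $\xi^j$ for $j \leq s-1$ remain representable by positive closed forms smooth off $V$ (Demailly, Chapter 3, \S4 of \cite{demailly}). Intersecting the expanded vanishing relation with suitable powers of $E$ and pushing forward, one gets an identity of the shape
\begin{equation*}
a^{k-r} \cdot (-1)^{k-r-1}\pi_*(E^{k-r}).(\text{positive class}) = \sum_{j} (\pm) a^{\bullet}\, \xi^{\bullet}.\big((-1)^{j-1}\pi_*(E^j)\big),
\end{equation*}
where each term on the right is nonnegative: the hypotheses say $(-1)^{j-1}\pi_*(E^j)$ is effective and meets $V$ in dimension $\leq k-j-1$, so that intersecting with the appropriate power of $\xi$ (a positive current smooth off $V$) produces a nonnegative measure by the usual dimension/monotonicity estimates of \cite{demailly}; the left side, by the "strictly effective" hypothesis on $(-1)^{k-r-1}\pi_*(E^{k-r})$, is strictly positive unless $a = 0$. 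The movability hypothesis — "$V$ is not the only effective cycle in its class" — is what lets us replace $V$ by a cycle meeting it properly so these intersection numbers are genuinely well-defined and nonnegative, exactly as in Lemma \ref{LemmaManifoldDimension4}. Hence $a = 0$, $\zeta = \pi^*(\xi)$, and we finish as before via Paun and the condition on $Y$. For (iii'), where $q > 0$, the extra input is $c_1(Y)|_V$ ample: in the push-forward one now also has factors of $K_X = \pi^*(K_Y) + (s-1)E$, and expanding these introduces terms $\xi^\bullet.K_Y^\bullet.((-1)^{j-1}\pi_*E^j)$ whose sign one controls using ampleness of $-K_Y|_V$ (equivalently $c_1(Y)|_V$) on the lower-dimensional cycles supported near $V$; the Picard-number-one hypothesis on $V$ is what keeps the combinatorics of which cycles appear manageable.

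The main obstacle I expect is the bookkeeping in cases (ii')/(iii'): after expanding $(\pi^*\xi - aE)^{k-r-1-q}.(\pi^*K_Y + (s-1)E)^q$ and pushing forward, one must verify that \emph{every} cross term is nonnegative, which requires both (a) that the relevant power of $\xi$ can still be represented by a positive current with singularities only along $V$ — this caps how many copies of $\xi$ may appear before the codimension-$s$ singular locus becomes an issue, and dictates the precise range $j = s, \ldots, k-r-1$ in the hypotheses — and (b) that the effective cycle $(-1)^{j-1}\pi_*(E^j)$ meets $V$ in dimension $\leq k-j-1$ so that the product with $\xi^{k-j-1}$ has the right dimension to be a measure rather than a current of positive degree. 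Getting the inductive/monotonicity estimates from \cite{demailly} to apply cleanly at each step, and isolating the single strictly-positive term that forces $a = 0$, is the technical heart; the reduction to $Y$ via Paun's theorem and the final appeal to the non-vanishing condition are then routine, exactly paralleling Lemma \ref{LemmaManifoldDimension4}. Cases (i) and the algebraic analogues $NA$, $NB$ should be genuinely easier — for $NA$/$NB$ one just notes $\pi_*$ and $\pi^*$ preserve Néron–Severi groups, as remarked at the start of this section.
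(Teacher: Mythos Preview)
Your outline for cases (i) and (ii) is essentially the paper's argument: expand, intersect with a suitable power of $E$, push forward, and use the effectivity/positivity hypotheses together with the Demailly wedge estimates to force $a=0$, then finish with Paun and the hypothesis on $Y$.

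For (ii') and (iii'), however, there is a real gap. You treat (ii') as ``(ii) with easier combinatorics because $V$ has Picard number one'', but the hypotheses of (ii') are strictly weaker than those of (ii): you are no longer told that the cycles $(-1)^{j-1}\pi_*(E^j)$ meet $V$ in dimension $\le k-j-1$, only that they are effective. So the step where you intersect a power of $\xi$ (a current singular along $V$) with these cycles is not justified for $j>s$; the movability of $V$ only handles $j=s$. The paper's actual device is different and is the point of the Picard-number-one hypothesis: since $\dim H^{1,1}(V)=1$, one has $\xi|_V=b\,\omega|_V$ for some ample $\omega$ on $Y$ and some $b\ge 0$ (the sign coming from $\xi.V=\xi.V'$ effective with $V'\ne V$). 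Hence $\zeta|_E=(b\,\pi^*\omega-aE)|_E$, so the equation $\zeta^{k-r-1}.\pi^*(\omega^r).E=0$ becomes $(b\,\pi^*\omega-aE)^{k-r-1}.\pi^*(\omega^r).E=0$, and now the pushforward involves only powers of the \emph{ample} class $\omega$ against the effective cycles $(-1)^{j-1}\pi_*(E^j)$, so every term is automatically nonnegative without any intersection-dimension hypothesis. This replacement is the missing idea.

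The same mechanism drives (iii'): one does not ``control signs'' of mixed $\xi^\bullet.K_Y^\bullet$ terms directly. Instead, since $c_1(Y)|_V$ is ample and $V$ has Picard number one, write $c_1(Y)|_V=T|_V$ with $T$ ample (or zero) on $Y$, so that $c_1(X)|_E=(\pi^*T-(s-1)E)|_E$; then $\zeta^{k-r-1-q}.c_1(X)^q.\pi^*(c_1(Y))^r.E=0$ becomes an identity in $b\,\pi^*\omega$, $\pi^*T$, and $E$ which can be pushed forward and analyzed term-by-term as in (ii). There is also a second step you do not mention: once $a=0$, one must still deduce $\xi^{k-r-1-q}.K_Y^q=0$ on $Y$, and when $q\ge s$ this requires a separate argument (in the paper, showing $\xi|_V=0$ from the pushed-forward equation and the ampleness of $c_1(Y)|_V$).
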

Remark: For the proof of i) when $q=0$ and $X\rightarrow X_0$ is a finite blowup along smooth centers of dimension $\leq r$ where $X_0$ has Picard number $1$, see also \cite{bayraktar} and \cite{bayraktar-cantat}.  Our argument here is different from that used in those papers; in particular, here we can show that for the examples given in their papers, if $\zeta \in NS_{\mathbb{R}}(X)$ (not necessarily nef) is such that $\zeta ^{k-r-1}=0$ then $\zeta =0$. 
\begin{proof} 
i)  Assume that $Y$ satisfies the non-vanishing condition $B(r,q)$ (the case of non-vanishing condition $A(r,q)$ is similar; the algebraic analogs $NA(r,q)$ and $NB(r,q)$ are also similar, see the remark at the beginning of this section). Let $\zeta$ be a nef class on $X$ such that $\zeta ^{k-r-1-q}.K_X^q=0$. We need to show that $\zeta =0$. We can write $\zeta =\pi ^*(\xi )-aE$ where $a\geq 0$, and $\xi =\pi _*(\zeta )$. Let $t=dim(V)$ then we can write $K_X=\pi ^*(K_Y)+(k-1-t)E$. We first show that $a=0$. (Note that in the proof of i) we do not need that $a\geq 0$.) 

By assumption we have
\begin{eqnarray*}
0&=&\zeta ^{k-r-1-q}.K_X^q\\
&=&(\pi ^*(\xi )-aE)^{k-r-1-q}.(\pi ^*(K_Y)+(k-1-t)E)^q\\
&=&[\sum _{j=0}^{k-r-1-q}(-a)^{k-r-1-j-q}C(j,k-r-1-q)\pi ^*(\xi ^j).E^{k-r-1-j-q}]\\
&&\times [\sum _{i=0}^q(k-1-t)^{q-i}C(i,q)\pi ^*(K_Y)^iE^{q-i}]\\
&=&\sum _{i,j}(-a)^{k-r-1-j-q}(k-1-t)^{q-i}C(j,k-r-1)C(i,q)\pi ^*(\xi ^j).\pi ^*(K_Y^i).E^{k-r-1-j-i}.
\end{eqnarray*}
Here $C(j,k-r-1-q)$ and $C(i,q)$ are the binomial numbers. Intersecting the above with $E^{r-t+1}$ and then pushing forward by the map $\pi$ we obtain by the projection formula
\begin{eqnarray*}
0=\sum _{i,j}(-a)^{k-r-1-j-q}(k-1-t)^{q-i}C(j,k-r-1)C(i,q)\xi ^j.K_Y^i.\pi _*(E^{k-t-j-i}).
\end{eqnarray*}
Observe that except for the first term $(-a)^{k-r-1-q}(k-1-t)^{q}\pi _*(E^{k-t})$, other terms are zero (in fact, if $i+j>0$ then $\pi _*(E^{k-t-j-i})$ must be zero because it has dimension $t+i+j>t$ and has support in $V=\pi (E)$ which is of dimension $t$); and the first term is $(-1)^{r-t+q}a^{k-r-1-q}(k-1-t)^qV$ (see the formula in Section 4.3 of \cite{fulton}). Therefore $a=0$ as wanted.

Thus $\zeta =\pi ^*(\xi )$, and from \cite{paun}, it follows that $\xi $ is nef. Pushing forward the equality $\pi ^*(\xi ^{k-r-1-q}).(\pi ^*(K_Y)+(k-t-1)E)^q$ by the map $\pi$, from the assumption $k-q\geq r+1>r=dim(V)$ we see as in the above paragraph that $\xi ^{k-r-1-q} .K_Y^q=0$. Then the assumption on $Y$ implies that $\xi =0$ and hence $\zeta =\pi ^*(\xi )=0$.  

ii)  Assume that $Y$ satisfies the non-vanishing condition $B(r,0)$ (the case of non-vanishing conditions $A(r,0)$, $NA(r,0)$ and $NB(r,0)$ are similar). Let $\zeta $ be a nef class on $X$ such that $\zeta ^{k-r-1}=0$. We will show that $\zeta =0$. We can write $\zeta =\pi ^*(\xi )-aE$ where $a\geq 0$, and $\xi =\pi _*(\xi )$. As in the proof of i), it suffices to show that $a=0$. 

The proof proceeds similarly to that of i) and of Lemma \ref{LemmaManifoldDimension4} iii). We have
\begin{eqnarray*}
0=\zeta ^{k-r-1}=\sum _{j=0}^{k-r-1}(-a)^{k-r-1-j}C(j,k-r-1)\pi ^*(\xi ^j).E^{k-r-1-j}. 
\end{eqnarray*}
Intersecting with $E$ we obtain
\begin{eqnarray*}
0=\sum _{j=0}^{k-r-1}(-a)^{k-r-1-j}C(j,k-r-1)\pi ^*(\xi ^j).E^{k-r-j}. 
\end{eqnarray*}
Pushing forward the above equality by $\pi$, we obtain 
\begin{equation}
0=\sum _{j=0}^{k-r-1}(-a)^{k-r-1-j}\xi ^j.\pi _*(E^{k-r-j}). 
\label{Equation5}\end{equation}
In Equation (\ref{Equation5}), the terms corresponding with $k-r-j<s$ (or equivalently $j>k-r-s$) are zero. Hence
\begin{eqnarray*} 
0=\sum _{j=0}^{k-r-s}(-a)^{k-r-1-j}\xi ^j.\pi _*(E^{k-r-j}). 
\end{eqnarray*} 
By assumption and the argument in the proof of Lemma \ref{LemmaManifoldDimension4} iii), each individual term in the above is effective, and in the first term $(-1)^{k-r-1}\pi _*(E^{k-r})$ is strictly effective. Since $a\geq 0$, this implies that $a=0$ as wanted.  

ii') We follow the proof of ii). Let $\omega$ be an ample class on $Y$. Since $V$ is not the only effective curve in its cohomology class, it follows that $\xi .V$ is effective. Let $\iota :V\subset Y$ be the inclusion. Because $V$ has Picard number $1$, it follows that $\xi |_V=\iota ^*(\xi )=b\omega |_{V}$ for some real number $b$. Since $b\omega .V=\iota _*(\xi |_V)=\xi .V$, and $\xi .V$ is effective and $\omega .V$ is strictly effective, it follows that $b\geq 0$. The equation $\zeta ^{k-r-1}=0$ implies
\begin{eqnarray*}
\zeta ^{k-r-1}.\pi ^*(\omega ^r).E=0,
\end{eqnarray*}
and the latter is the same as 
\begin{eqnarray*}
(b\pi ^*(\omega )-aE)^{k-r-1}.\pi ^*(\omega ^r).E=0.
\end{eqnarray*}
Then we can proceed as in the proof of ii).

iii') Assume that $Y$ satisfies the non-vanishing condition $B(r,q)$ (the case of non-vanishing conditions $A(r,q)$, $NA(r,q)$ and $NB(r,q)$ are similar). Let $\zeta $ be a nef class on $X$ such that $\zeta ^{k-r-1-q}.K_X^q=0$. We will show that $\zeta =0$. We can write $\zeta =\pi ^*(\xi )-aE$ where $a\geq 0$, and $\xi =\pi _*(\xi )$. First, we show that $a=0$.

Note that $\zeta ^{k-r-1-q}.K_X^q=0$ implies $\zeta ^{k-r-1-q}.K_X^q.\pi ^*(K_Y)^r.E=0$, and the latter is the same as $\zeta ^{k-r-1-q}.c_1(X)^q.\pi ^*(c_1(Y))^r.E=0$. The latter is the same as $\zeta ^{k-r-1-q}|_E.c_1(X)^q_E.\pi ^*(c_1(Y))^r|_E=0$. Since $V$ has Picard number $1$ and $c_1(Y)|_V$ is effective, as in the proof of ii') we can write $c_1(Y)|_V=T|_V$, where $T$ is either zero or ample on $Y$. We have 
$$\pi ^*(c_1(Y))|_E=\pi _E^*(c_1(Y)|_V)=\pi _E^*(T |_V)=\pi ^*(T )|_E.$$ 
Using $c_1(X)=\pi ^*(c_1(Y))-aE$, we can then write
\begin{eqnarray*}
c_1(X)|_E=(\pi ^*(T )-(s-1)E)|_E.
\end{eqnarray*}
Let $\omega$ be an ample divisor on $Y$. Then the original equation $\zeta ^{k-r-1-q}.c_1(X)^q.\pi ^*(c_1(Y))^r.E=0$ becomes 
\begin{eqnarray*}
(\pi ^*(\xi )-aE)^{k-r-1-q}(\pi ^*(T)-(s-1)E)^q.E.\pi ^*(\omega )^r=0. 
\end{eqnarray*}
Pushforward this equation by $\pi$, we find by the projection formula 
\begin{eqnarray*}
\pi _*[(\pi ^*(\xi )-aE)^{k-r-1-q}(\pi ^*(T )-(s-1)E)^q.E].\omega ^r=0.
\end{eqnarray*}
Arguing as in the proof of ii) we see that if $a>0$ then the term $\pi _*[(\pi ^*(\xi )-aE)^{k-r-1-q}(\pi ^*(T)-(s-1)E)^q.E]$ is psef and non-zero, and hence  $\pi _*[(\pi ^*(\xi )-aE)^{k-r-1-q}(\pi ^*(T)-(s-1)E)^q.E].\omega ^r$ can not be zero since $\omega$ is ample on $Y$. Hence $a=0$ as wanted. 

Hence, for the proof of iii') it suffices to show that $\xi ^{k-r-1-q}.K_Y^q=0$. We consider two cases:

Case 1: $q<k-dim (V)$. Pushing the equation $\pi ^*(\xi ^{k-r-1-q}).(\pi ^*(c_1(Y))-(s-1)E)^q=0$ by the map $\pi$, we then find that $\xi ^{k-r-1-q}.K_Y^q=0$ as wanted. 

Case 2: $q\geq k-dim(V)$. In this case we first pushforward the equation $\pi ^*(\xi ^{k-r-1-q}).(\pi ^*(c_1(Y))-(s-1)E)^q.E=0$ by the map $\pi$ and find that
\begin{eqnarray*}
\iota _*(\sum _{q\geq i\geq k-dim (V)}(\xi |_V)^{k-r-1-q}.(c_1(Y)|_V)^{q-i}(-1)^{i-1}(\pi _E)_*(E^{i-1}))=0.
\end{eqnarray*}
As argued above, each term insided the $\iota _*$ on the LHS of the above equation is psef, therefore each of them must be zero. In particular, the term with $i=k-dim(V)$, which is $(\xi |_V)^{k-r-1-q}.(c_1(Y)|_V)^{q-k+dim (V)}$, must be zero. Since $c_1(Y)|_V$ is ample by assumption, and since $\xi |_V$ is either zero or ample on $V$, it then follows that $\xi |_V=0$. Therefore 
\begin{eqnarray*} 
0=\pi ^*(\xi ^{k-r-1-q}).(\pi ^*(c_1(Y))-(s-1)E)^q=\pi ^*(\xi )^{k-r-1-q}.\pi ^*(c_1(Y)^q).
\end{eqnarray*}
Pushing this by the map $\pi$, we find that $\xi ^{k-r-1-q}.c_1(Y)^q=0$, as wanted.
\end{proof}
 
In the border case $dim(V)=r+1$, we can make Theorem \ref{TheoremEk} ii), ii') and iii') stronger. Part i) of the following result can be regarded as a generalization of Lemma \ref{LemmaManifoldDimension4} ii). Examples satisfying Corollary \ref{CorollaryExtremeCases} will be given at the end of this section (see in particular Examples 4 and 5). 
\begin{corollary}
Let $Y$ be a projective manifold of dimension $k$, and let $V\subset Y$ be a compact complex submanifold. Let $\pi :X\rightarrow Y$ be the blowup of $Y$ at $V$, and let $E$ be the exceptional divisor. Let $\mathcal{E}=N_{V/Y}$ be the normal vector bundle of $V$ in $Y$. Let $\iota :V\rightarrow Y$ be the inclusion, and let $\pi _E:E\rightarrow V$ be the projection.

i) Assume that $Y$ satisfies the non-vanishing condition $A(r,0)$ (correspondingly the non-vanishing conditions $NA(r,0)$, $B(r,0)$ and $NB(r,0)$), and $dim(V)=r+1$. Assume moreover that $V$ is not the only effective variety (with real coefficients) in its cohomology class, and $\iota _*(c_1(\mathcal{E}))$ is not psef. Then $X$ also satisfies the non-vanishing condition $A(r,0)$ (respectively the non-vanishing conditions $NA(r,0)$, $B(r,0)$ and $NB(r,0)$).

ii) (This is a generalization of i).)  Assume that $Y$ satisfies the non-vanishing condition $A(r,0)$ (correspondingly the non-vanishing conditions $NA(r,0)$, $B(r,0)$ and $NB(r,0)$), and $dim(V)=r+1$. Assume moreover that there is an integer number $j\geq 1$, an effective cycle $V'$ having the same cohomology class as that of $V$ such that $dim(V'\cap V)\leq dim (V)-j$, and $\iota _*(c_j(\mathcal{E}))$ is not psef. Then $X$ also satisfies the non-vanishing condition $A(r,0)$ (respectively the non-vanishing conditions $NA(r,0)$, $B(r,0)$ and $NB(r,0)$).

\label{CorollaryExtremeCases}\end{corollary}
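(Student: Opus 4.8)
I will prove (ii) and obtain (i) as the special case $j=1$. First I would reduce to the condition $A(r,0)$: the cases $B(r,0)$, $NA(r,0)$, $NB(r,0)$ come out of the identical argument, using for the algebraic ones that $\pi_*$ and $\pi^*$ preserve N\'eron--Severi groups, as recalled at the beginning of this section. So let $\zeta$ be nef on $X$ with $\zeta^{k-r-1}=0$; the goal is to show $\zeta$ is proportional to a rational class. If the codimension $s$ of $V$ in $Y$ is $\le 1$ then $\pi$ is an isomorphism and there is nothing to do, so assume $s\ge 2$; note $s=k-(r+1)=k-r-1$. Write $\zeta=\pi^*(\xi)-aE$ with $\xi=\pi_*(\zeta)$; pairing $\zeta$ with a line in a fibre of $E=\mathbb{P}(\mathcal{E})\xrightarrow{\pi_E}V$ and using $\mathcal{O}_X(E)|_E=\mathcal{O}_E(-1)$ shows $a\ge 0$. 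Exactly as in the proof of Theorem \ref{TheoremEk}, once $a=0$ is established the proof is finished: then $\zeta=\pi^*(\xi)$, $\xi$ is nef by \cite{paun}, $\xi^{k-r-1}=\pi_*(\zeta^{k-r-1})=0$, and $A(r,0)$ on $Y$ forces $\xi$, hence $\zeta$, to be proportional to a rational class. So the entire content is the claim $a=0$.

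Second, assuming $a>0$ for contradiction, I would extract a Chern-class identity on $E$. Restricting $\zeta^{k-r-1}=0$ to $E$ and using $k-r-1=s$ and $\zeta|_E=\pi_E^*(\xi|_V)-ae$ (with $e=E|_E$) gives $\big(\pi_E^*(\xi|_V)-ae\big)^{s}=0$ in $H^{2s}(E)$. Expanding by the binomial theorem and rewriting the single term $(-a)^se^s$ by means of the Grothendieck relation $\sum_{i=0}^{s}(-1)^i\pi_E^*\big(c_i(\mathcal{E})\big)e^{s-i}=0$ (same sign convention as in Lemma \ref{LemmaP4}), the Leray--Hirsch decomposition $H^{2s}(E)=\bigoplus_{i=1}^{s}e^{s-i}\cdot\pi_E^*\!\big(H^{2i}(V)\big)$ together with injectivity of $\pi_E^*$ lets me read off, by matching the coefficient of $e^{s-i}$,
\[
c_i(\mathcal{E})=\binom{s}{i}\,a^{-i}\,(\xi|_V)^i\quad\text{in }H^{2i}(V),\qquad i=1,\dots,s.
\]
In particular, by the projection formula,
\[
\iota_*\big(c_j(\mathcal{E})\big)=\binom{s}{j}\,a^{-j}\,\iota_*\big((\xi|_V)^j\big)=\binom{s}{j}\,a^{-j}\,\big(\xi^{j}\cdot[V]\big),
\]
where $1\le j\le s$ automatically, since otherwise $c_j(\mathcal{E})=0$, which is psef, contrary to hypothesis.

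Third, I would show $\xi^{j}\cdot[V]$ is pseudoeffective; since $a>0$ and $\binom sj>0$ this contradicts the hypothesis that $\iota_*(c_j(\mathcal{E}))$ is not psef, hence forces $a=0$. Fix a K\"ahler class $\omega_X$ on $X$; for $\epsilon>0$ the class $\zeta+\epsilon\omega_X$ is K\"ahler, so $\xi_\epsilon:=\pi_*(\zeta+\epsilon\omega_X)$ is represented by a closed positive $(1,1)$--form smooth on $Y\setminus V$. As $V$ has codimension $s\ge j$ in $Y$, the power $\xi_\epsilon^{j}$ is a well-defined closed positive $(j,j)$--current, smooth off $V$. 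By hypothesis there is an effective cycle $V'$ with $[V']=[V]$, $\dim V'=\dim V$ and $\dim(V'\cap V)\le\dim V-j$; this dimension bound is exactly the condition under which the wedge $\xi_\epsilon^{j}\wedge[V']$ is a well-defined closed positive current (intersection of a closed positive current with an analytic cycle meeting its unbounded locus in low enough dimension, cf.\ \cite{demailly} and the proof of Lemma \ref{LemmaManifoldDimension4} (iii)), and this current represents $\xi_\epsilon^{j}\cdot[V']=\xi_\epsilon^{j}\cdot[V]$. Thus $\xi_\epsilon^{j}\cdot[V]$ is psef for every $\epsilon>0$; letting $\epsilon\to 0$ and using that the pseudoeffective cone is closed gives that $\xi^{j}\cdot[V]$ is psef, as needed. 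This proves (ii).

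Finally, (i) follows from (ii) with $j=1$: if $V$ is not the only effective cycle in its cohomology class, take an effective $V'\ne V$ with $[V']=[V]$ and write $V'=mV+R$ with $R$ effective and $m\ge 0$; then $[R]=(1-m)[V]$ is psef, which against a power of an ample class forces $m\le 1$, while $m=1$ would give $R=0$ and $V'=V$, a contradiction; so $m=0$, $V$ is not a component of $V'$, hence $V\not\subseteq V'$ and, $V$ being irreducible of dimension $\dim V=\dim V'$, $\dim(V'\cap V)\le\dim V-1$ --- which, with ``$\iota_*(c_1(\mathcal{E}))$ not psef'', is precisely the hypothesis of (ii) for $j=1$. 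The step I expect to require the most care is the positivity argument of the third paragraph: one must check that the singular positive current $\xi_\epsilon^{j}$ (whose unbounded locus is exactly $V$) really can be wedged with the \emph{moving} cycle $V'$, i.e.\ that the hypothesis $\dim(V'\cap V)\le\dim V-j$ is exactly what Demailly's intersection theory demands for $\xi_\epsilon^{j}\wedge[V']$ to be a legitimate closed positive current in the class $\xi^{j}\cdot[V]$; the Chern-class identity of the second paragraph is, by contrast, routine Leray--Hirsch bookkeeping on the $\mathbb{P}^{s-1}$-bundle $E$.
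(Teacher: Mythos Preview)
Your argument is correct and follows essentially the same route as the paper: restrict $\zeta^{k-r-1}=0$ to $E$, use the Grothendieck relation for the $\mathbb{P}^{s-1}$-bundle to read off $c_j(\mathcal{E})=\binom{s}{j}a^{-j}(\xi|_V)^j$, and derive a contradiction from the fact that $\iota_*((\xi|_V)^j)=\xi^j\cdot V=\xi^j\cdot V'$ is pseudoeffective by the Demailly-type intersection argument of Lemma~\ref{LemmaManifoldDimension4}(iii). The paper proves (i) first and sketches (ii) as ``similar''; you reverse the order, and your bookkeeping on the binomial constants is in fact cleaner than the paper's.

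One small slip in your final paragraph: when reducing (i) to (ii), you write $V'=mV+R$ with $m\ge 0$ real, deduce $m\le 1$ and $m\neq 1$, and then conclude ``so $m=0$''. With real coefficients $m$ could lie strictly between $0$ and $1$. The fix is immediate: if $0<m<1$, replace $V'$ by $\tfrac{1}{1-m}R$, which is effective, has class $[V]$, and has support not containing $V$, hence meets $V$ in dimension $\le\dim V-1$. (The paper glosses over this point as well, referring back to the proof of Theorem~\ref{TheoremEk}.)
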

\begin{proof}

i) Let $\zeta $ be a nef class on $X$ with $\zeta ^{k-r-1}=0$. We write $\zeta =\pi ^*(\xi )-aE$ with $a\geq 0$. To prove ii) it suffices to show that $a=0$. Let $\iota _E:E\rightarrow X$ be the inclusion. Then we have $\iota _E^*(\zeta )^{k-r-1}=0$, which is the same as 
\begin{eqnarray*}
(\pi _E^*(\xi |_V)-ae)^{k-r-1}=0.
\end{eqnarray*}

Because $dim(V)=r+1$, the defining equation for $H^*(E)$ is then
\begin{eqnarray*}
e^{k-r-1}-\pi _E^*(c_1(\mathcal{E}))e^{k-r-2}+\ldots =0.
\end{eqnarray*}
Comparing this equation with the equation $(\pi _E^*(\xi |_V)-ae)^{k-r-1}=0$, it follows that $\pi _E^*(\xi |_V)=a\pi _E^*(c_1(\mathcal{E}))$. Since the pullback maps $\pi _E^*:H^*(V)\rightarrow H^*(E)$ are injective, we obtain $\xi |_V=ac_1(\mathcal{E})$. From this, we must have $a=0$. Otherwise, pushing forward by $\iota$ we obtain $\xi .V=a\iota _*(c_1(\mathcal{E}))$. This is a contradiction, since the LHS is psef (as in the proof of Theorem \ref{TheoremEk}) while the RHS is not psef by assumption.  

ii) The proof is similar to that of i): Rescaling, we may assume that $a=1$. We now use $\xi ^j|_V=C(k-r-1,j) c_j(\mathcal{E})$, and $\iota _*(\xi ^j|_V)=\xi ^j.V=\xi ^j.V'$ is psef.
\end{proof}  
 
{\bf Remark 1.} In Theorem \ref{TheoremEk} ii), we can represent $\pi _*(E^j)$ for $j=s,s+1,\ldots ,k-r$ in terms of the Chern classes of $\mathcal{E}$. Recall that $\pi :X\rightarrow Y$ is the blowup along a submanifold $V\subset Y$, $s=codim(V)$, $\mathcal{E}=N_{V/Y}$ the normal vector bundle of $V$ in $Y$, $E=\mathbb{P}(\mathcal{E})$ the exceptional divisor of $\pi$ with $\pi _E:E\rightarrow V$ the projection, and $e=E|_E$. First of all, we have $(-1)^{s-1}\pi _*(E^s)=V$ by the formula at the beginning of Section 4.3 in \cite{fulton}. To compute the pushforward of other $E^j$ we use the following formula (see Proposition 3.1 and the proofs of Lemma 3.3 and Proposition 6.7 in \cite{fulton}): 
\begin{equation}
(\pi _E)_*(\sum _{j=0}^{s-1}e^{j}.\pi _E^*(x_j))=(-1)^{s-1}x_{s-1}.
\label{Equation6}\end{equation}
It is then easy to compute the pushforward of $E^j$ for $j\geq s$. Let $\iota _E:E\subset X$ be the inclusion of $E$ in $X$. Then 
\begin{eqnarray*}
\pi _*(E^{s+1})=\pi _*(\iota _E)_*(e^s)=\iota _*(\pi _E)_*(e^s).
\end{eqnarray*}
By the defining equation 
\begin{eqnarray*}
\sum _{j=0}^{s}(-1)^je^{s-j}.\pi _E^*(c_j(\mathcal{E}))=0,
\end{eqnarray*}
we find that 
\begin{eqnarray*}
e^s=e^{s-1}\pi _E^*(c_1(\mathcal{E}))-e^{s-2}\pi _E^{*}(c_2(\mathcal{E}))+\ldots .
\end{eqnarray*}
Using (\ref{Equation6}), we find that $(\pi _E)_*(e^s)=(-1)^{s-1}c_1(\mathcal{E})$, and hence $\pi _*(E^{s+1})=(-1)^{s-1}\iota _*(c_1(\mathcal{E}))$. Similarly, we can compute $\pi _*(E^{s+2})$: We have 
\begin{eqnarray*} 
\pi _*(E^{s+2})=\pi _*(\iota _E)_*(e^{s+1})=\iota _*(\pi _E)_*(e^{s+1}).
\end{eqnarray*}
Now we have
\begin{eqnarray*}
e^{s+1}&=&e^s\pi _E^*(c_1(\mathcal{E}))-e^{s-1}\pi _E^*(c_2(\mathcal{E}))+\ldots \\
&=&(e^{s-1}\pi _E^*(c_1(\mathcal{E}))-e^{s-2}\pi _E^*(c_2(\mathcal{E}))+\ldots )\pi _E^*(c_1(\mathcal{E}))-e^{s-1}\pi _E^*(c_2(\mathcal{E}))+\ldots \\
&=&e^{s-1}\pi _E^*(c_1(\mathcal{E})^2-c_2(\mathcal{E}))+\ldots ,
\end{eqnarray*}
hence $(\pi _E)_*(e^{s+1})=(-1)^{s-1}[c_1(\mathcal{E})^2-c_2(\mathcal{E})]$. Therefore 
\begin{eqnarray*}
\pi _*(E^{s+2})=(-1)^{s-1}\iota _*(c_1(\mathcal{E})^2-c_2(\mathcal{E})).
\end{eqnarray*}
Similarly we can compute the pushforward of other $E^j$'s in terms of Chern classes of $\mathcal{E}$.

{\bf Example 1.} We now give a construction to provide many examples when the conditions of Theorem \ref{TheoremEk} can be easily checked. 

Assume that $\pi _1:Y_1\rightarrow Y$ is a blowup along a smooth submanifold $W_1\subset Y$ of dimension $d_1\geq 1$, and let $E_1$ be the exceptional divisor. Let $V\sim \mathbb{P}^{k-d_1-1}\subset E_1$ be a fiber of the restriction $\pi _1:E_1\rightarrow V_1$. Let $\pi :X\rightarrow Y_1$ be the blowup of $Y_1$ at $V$. If $Y_1$ satisfies the non-vanishing condition $A(r,q)$ (correspondingly $NA(r,q)$, $B(r,q)$ and $NB(r,q)$) then $X$ satisfies the non-vanishing condition $A(r,q)$ (respectively $NA(r,q)$, $B(r,q)$ and $NB(r,q)$). 

First, if $dim(V)\leq r$ then we can apply Theorem \ref{TheoremEk} i). Hence we can assume that $dim(V)\geq r+1$, and will show that Theorem \ref{TheoremEk} ii), ii') and iii') apply. 

In fact, let $\mathcal{E}=N_{V/Y_1}$ be the normal vector bundle of $V$ in $Y_1$.   Then we have the following SES of vector bundles over $V$ (see Appendix B.7.4 in \cite{fulton})
\begin{eqnarray*}
0\rightarrow N_{V/E_1}\rightarrow N_{V/Y_1}\rightarrow N_{E_1/Y_1}|_{V}\rightarrow 0.
\end{eqnarray*}
Therefore $c(N_{V/Y_1})=c(N_{V/E_1})c(N_{E_1/Y_1}|_V)$. Because $E_1$ is the projectivization of a vector bundle over $W_1$ and $V$ is a fiber of $E_1\rightarrow W_1$, it follows that the normal vector bundle $N_{V/E_1}$ is trivial. Hence $c(N_{V/E_1})=1$. If $h$ is the class of a hyperplane on $V$ then we have
\begin{eqnarray*}
N_{E_1/Y_1}|_V=(E_1|_{E_1})|_{V}=-h,
\end{eqnarray*}
since $E_1|_{E_1}$ is the tautological bundle. Therefore $c(N_{E_1/Y_1}|_V)=1-h$, and $c(N_{V/Y_1})=1-h$. Hence $c_0(N_{V/Y_1})=1$, $c_1(N_{V/Y_1})=-h$, and the other Chern classes are zero. Thus, if $E$ is the exceptional divisor of the blowup $\pi :X\rightarrow Y_1$ and $\pi _E:E\rightarrow V$ is the projection then the defining equation for $H^*(E)$ over $H^*(V)$ is 
\begin{eqnarray*}
e^s=\pi _E^*(c_1(\mathcal{E}))e^{s-1}. 
\end{eqnarray*} 
Therefore $$(-1)^{s+j-1}\pi _*(E^{s+j})=(-1)^{j}\iota _*(c_1(\mathcal{E})^j)=(-1)^j\iota _*((-h)^j)=\iota _*(h^j),$$ 
are all strictly effective, for $j=0,\ldots ,k-s=dim(V)$. Since in Theorem \ref{TheoremEk} ii) we assumed that $dim (V)\geq r+1$ we see that 
\begin{eqnarray*}
(-1)^{k-r-1}\pi _*(E^{k-r})=(-1)^{k-r-1}\pi _*(E^{s+k-r-s})=\iota _*(h^{dim (V)-r})
\end{eqnarray*}
is strictly effective. Also, all of $\iota _*(h^j)$ can be represented by linear subspaces of the other fibers of $\pi _{E_1}$ disjoint from $V$. Hence if $q=0$ we see that all the assumptions of Theorem \ref{TheoremEk} ii) are satisfied. In this case $V$ has Picard number $1$, and we can apply Theorem \ref{TheoremEk} ii'). We now check that $c_1(Y_1)|_V$ is ample in order to be able to apply Theorem iii') when $q>0$. In fact, from the SES of vector bundles on $V$
\begin{eqnarray*}
0\rightarrow T_V\rightarrow T_{Y_1}|_V\rightarrow \mathcal{E}\rightarrow 0,
\end{eqnarray*}
we have $c_1(Y_1)|_V=c_1(V)+c_1(\mathcal{E})$. As computed above $c_1(\mathcal{E})=-h$, and because $V$ is a projective space we find $c_1(V)=(dim(V)+1)h$. Therefore $c_1(Y_1)|_V=dim(V)h$ is ample because $dim(V)>0$. 

{\bf Example 2.} In the situation of Example 1, we can also blowup a hyperplane $W$ of $V$ to produce an example satisfying the assumptions of Theorem \ref{TheoremEk} ii) if either $dim (W)=dim (V)-1\leq r$ or the number $dim(W)-r$ is even. More precisely, assume that $\pi _1:Y_1\rightarrow Y$ is a blowup along a smooth submanifold $W_1\subset Y$ of dimension $d_1\geq 1$, and let $E_1$ be the exceptional divisor. Let $V\sim \mathbb{P}^{k-d_1-1}\subset E_1$ be a fiber of the restriction $\pi _1:E_1\rightarrow V_1$, and let $W\subset V$ be a hyperplane. Let $\pi :X\rightarrow Y_1$ be the blowup of $Y_1$ at $W$. If $Y_1$ satisfies the non-vanishing condition $A(r,q)$ (correspondingly $NA(r,q)$, $B(r,q)$ and $NB(r,q)$)
and either 

i) $dim(W)\leq r$

or 

ii) $dim(W)-r$ is even,

or 

iii) $r\geq 1$,

then $X$ satisfies the non-vanishing condition $A(r,q)$ (respectively $NA(r,q)$, $B(r,q)$ and $NB(r,q)$).  

In fact, in case i) we can apply Theorem \ref{TheoremEk} i). 

We consider case ii). The assumption in case ii) is the same as $(k-r)-s$ is even, where $s=k-dim(W)$ is the codimension of $W$. Let $E$ be the exceptional divisor of the blowup, and let $\pi _E:E\rightarrow W$ be the projection. Let $h$ be the hyperplane class in $W$. Then compute as in Example 1 we see that $$c(N_{W/Y_1})=c(N_{W/V}).c(N_{V/E_1}|_{W}).c(N_{E_1/Y_1}|_W)=(1+h)(1-h)=1-h^2.$$ 
Hence $c_0(N_{V/Y_1})=1$, $c_2(N_{V/Y_1})=-h^2$, and other Chern classes are zero. Thus the defining equation for $H^*(E)$ is 
\begin{eqnarray*}
e^s=-\pi _E^*(c_2(N_{V/Y_1}))e^{s-2}=\pi _E^*(h^2)e^{s-2}. 
\end{eqnarray*}
Then as in Example 1 we find $(-1)^{s+j-1}\pi _*(E^{s+j})=0$ if $j$ is odd, and $(-1)^{s+j-1}\pi _*(E^{s+j})=\iota _*(h^j)$ if $j$ is even. Hence all of them are effective, and if $j$ are even then they are strictly effective. Since we assume that $(k-r)-s$ is even, it follows that the term $(-1)^{k-r-1}\pi _*(E^{k-r})$ is strictly effective. Also, all of these classes can be represented by linear subspaces of the other fibers disjoint from $V$ and hence from $W$. Hence all the assumptions of Theorem \ref{TheoremEk} ii) are satisfied. In this case $W$ has Picard number $1$, and we can apply Theorem \ref{TheoremEk} ii'). We now check that $c_1(Y_1)|_W$ is ample so that Theorem \ref{TheoremEk} iii') also applies. As computed in Example 1, we have $c_1(Y_1)|_W=c_1(W)+c_1(\mathcal{E})$. Here we computed above that $c_1(\mathcal{E})=0$, and again have $c_1(W)=(dim(W)+1)h$. Hence    
$c_1(Y_1)|_W=(dim(W)+1)h$ is ample, as wanted.

In case iii) we apply the same argument as in ii) to the equation $\zeta ^{k-r-1-q}.K_X^q.E^2=0$, instead of the equation $\zeta ^{k-r-1-q}.K_X^q.E=0$.

{\bf Example 3.} We give a specific application of Examples 1 and 2. Let $X_0$ be a projective manifold of even dimension $k=2l$ satisfying the non-vanishing condition $B(l-1,q)$ (or $A(l-1,q)$, $NA(l-1,q)$ and $NB(l-1,q)$). For example, we can take $X_0=$ a projective hyper-K\"ahler manifold or a manifold with Picard number $1$. Let $1\leq j\leq l-1$ and $V_0\subset X_0$ be a manifold of dimension $j$. We let $X_1\rightarrow X_0$ be the blowup of $X_0$ along $V_0$. By Theorem \ref{TheoremEk} ii), we know that $X_1$ also satisfies the non-vanishing condition $B(l-1,q)$. Let $E_0\subset X_1$ be the exceptional divisor, then a fiber $V$ of $E_0\rightarrow V_0$ has dimension $l\leq k-1-j\leq k-2$. By Example 1, we see that if $X\rightarrow X_1$ is the blowup of $X_1$ along $V$, then $X$ also satisfies the non-vanishing condition $B(l-1,q)$. Hence by Theorem \ref{TheoremHyperKahler}, if $f\in Aut(X)$ is an automorphisms then either $h_{top}(f)=0$ or $f$ is cohomologically hyperbolic.        

{\bf Example 4.} We now give examples satisfying Corollary \ref{CorollaryExtremeCases} i) and ii). This example allows blowing up higher codimension submanifolds in Examples 1 and 2. Assume that $\pi _1:Y_1\rightarrow Y$ is a blowup along a smooth submanifold $W_1\subset Y$ of dimension $d_1\geq 1$, and let $E_1$ be the exceptional divisor. Let $V\sim \mathbb{P}^{k-d_1-1}\subset E_1$ be a fiber of the restriction $\pi _1:E_1\rightarrow V_1$, and let $W\subset V$ be a complex submanifold of dimension $r+1$. Let $\pi :X\rightarrow Y_1$ be the blowup at $W$.  

i) Assume that $2r+1\geq dim(V)\geq r+1$. If $Y_1$ satisfies the non-vanishing condition $A(r,0)$ (correspondingly $NA(r,0)$, $B(r,0)$ and $NB(r,0)$) then $X$ satisfies the non-vanishing condition $A(r,0)$ (respectively $NA(r,0)$, $B(r,0)$ and $NB(r,0)$). 

ii) Assume that $[(dim (V)-r)/2]\leq r$, here $[x]$ is the largest integer less than or $x$. Assume moreover that $W\subset V$ is a linear subspace. If $Y_1$ satisfies the non-vanishing condition $A(r,q)$ (correspondingly $NA(r,q)$, $B(r,q)$ and $NB(r,q)$) then $X$ satisfies the non-vanishing condition $A(r,q)$ (respectively $NA(r,q)$, $B(r,q)$ and $NB(r,q)$).

Proof:

i) Let $H$ be the hyperplane class in $V$, and $h=H|_W$. As in the proof of Example 2) we have 
\begin{eqnarray*}
c(N_{W/Y_1})&=&c(N_{W/V})c(N_{V/E_1}|_W)c(N_{E_1/Y_1}|_W)\\
&=&c(N_{W/V})(1-h).
\end{eqnarray*}
Let $t=dim (V)-dim (W)$ be the codimension of $W$ in $V$. Then 
\begin{eqnarray*}
c_{t+1}(N_{W/Y_1})=-hc_{t}(N_{W/V})=-deg(W)h^{t+1}, 
\end{eqnarray*}
the last equality is a consequence of the self-intersection formula (see page 103 in  \cite{fulton}). The assumption that $dim (V)\leq 2r+1$ implies that $c_{t+1}(N_{W/Y_1})$ is negative and is non-zero. Hence if $\iota :W\rightarrow Y_1$ is the inclusion map then $\iota _*(c_{t+1}(N_{W/Y_1}))$ is not psef. We can choose a submanifold $W'$ of another fiber disjoint from $V$ such that $W'$ has the same cohomology class as that of $W$. Hence Corollary \ref{CorollaryExtremeCases} ii) can be applied to complete the proof of i). 

ii) If $dim(V)=r+1$ then $W=V$ and we can apply Example 1. Therefore we need to consider only the case when $dim(V)>r+1$. Define $t=dim (V)-dim (W)=dim (V)-r-1>0$ and $s=[(t+1)/2]=[(dim(V)-r)/2]$. Since $W$ is a complete intersection of hyperplanes, as computed above we find that 
\begin{eqnarray*} 
c(N_{W/Y_1})=(1-h)(1+h)^t=\sum _{j=0}^{t+1}(C(t,j)-C(t,j-1))h^{j}.
\end{eqnarray*}  
By the properties of binomial numbers, $C(t,s)>C(t,s+1)$. This, together with the assumption that $s\leq r$ (hence $s+1\leq r+1$) implies that $c_{s+1}(N_{W/Y_1})=(C(t,s+1)-C(t,s))h^{s+1}$ is negative and is non-zero.  

Also, $W$ has Picard number $1$. We computed above that $c_1(N_{W/Y_1})=c_1(N_{W/V})-h$. Since $V$ is a projective space, we have $c_1(V)=(dim(W)+1)H$. Therefore 
\begin{eqnarray*}
c_1(Y_1)|_W&=&c_1(W)+c_1(N_{W/Y_1})=c_1(W)+c_1(N_{W/V})-h\\
&=&c_1(V)|_W-h=(dim (V)+1)H|_W-h=dim(V)h
\end{eqnarray*}
is ample. Thus we can apply the proofs of i) and Example 1. 

{\bf Example 5.} This example is to show that except for the requirement that $V$ is not the only effective cycle in its cohomology class, the condition in Corollary \ref{CorollaryExtremeCases} is always satisfied if we blowup enough points in generic positions. More precisely, let $Y$ be a complex projective manifold of dimension $k$ and let $V\subset Y$ be a proper compact complex submanifold of dimension $\geq 1$. Let $x_1,\ldots ,x_n,\ldots $ be a sequence of distinct points in $V$ such that $\bigcup _{j=1}^{\infty}x_j$ is not contained in any subvariety of dimension $dim(V)-1$. For any  $t$ , let $\pi _t:X_t\rightarrow Y$ be the blowup at $x_1,\ldots ,x_t$. Let $\widetilde{V}_t$ be the strict transform of $V$ in $X$, $\iota _t:\widetilde{V}_t\rightarrow X$ the inclusion map, and $\mathcal{E}_t=N_{\widetilde{V}_t/X}$ the normal vector bundle. If the number $t$ is large enough, then $(\iota _t)_*(c_1(\mathcal{E}_t))$ is not psef. 

Proof: From the SES of vector bundles on $\widetilde{V}_t$:
\begin{eqnarray*}
0\rightarrow T_{\widetilde{V}_t}\rightarrow T_{X_t}|_{\widetilde{V}_t}\rightarrow \mathcal{E}_t\rightarrow 0,
\end{eqnarray*}  
we have $c_1(\mathcal{E}_t)=\iota _t^*(c_1(X_t))-c_1(\widetilde{V}_t)$. Let $E_j=\pi ^{-1}(x_j)$ be the exceptional divisor over the point $x_j$. Then 
\begin{eqnarray*}
c_1(X_t)=\pi _t^*(c_1(Y))-(k-1)\sum _{j=1}^tE_j.
\end{eqnarray*}
If $p_t=\pi _t|_{\widetilde{V}_t}:\widetilde{V}_t\rightarrow V$ is the restriction of $\pi _t$ to $\widetilde{V}_t$, then $p_t$ is the blowup of $V$ at $x_1,\ldots ,x_t$ (see Example 7.17 in the book Harris \cite{harris}). Moreover, $\alpha _j=E_j\cap \widetilde{V}_t=p_t^{-1}(x_j)$ is the exceptional divisor over $x_j$ of the map $p_t$. Hence
\begin{eqnarray*}
c_1(\widetilde{V}_t)=p_t^*(c_1(V))-(dim (V)-1)\sum _{j=1}^t\alpha _j,
\end{eqnarray*}  
and 
\begin{eqnarray*}
c_1(\mathcal{E}_t)&=&\iota _t^*\pi _t^*(c_1(Y))-p_t^*(c_1(V))-(k-dim (V))\sum _{j=1}^t\alpha _j\\
&=&p_t^*(c_1(Y)|_V)-p_t^*(c_1(V))-(k-dim (V))\sum _{j=1}^t\alpha _j.
\end{eqnarray*} 

Now we show that $(\iota _t)_*(c_1(\mathcal{E}_t))$ is not psef for $t$ large enough. 

We consider two cases:

Case 1: $V$ has dimension $1$. In this case, $c_1(Y)|_V$ and $c_1(V)$ are just numbers, and hence $c_1(\mathcal{E}_t)=c_1(Y)|_V-c_1(V)-t(k-dim(V))$ is negative when $t>c_1(Y)|_V-c_1(V)$. 

Case 2: $V$ has dimension $\geq 2$. Assume otherwise that there are large values of $t$ (as large as desired) such that the cohomology class $(\iota _t)_*(c_1(\mathcal{E}))$ can be represented by positive closed currents $S_t$ of bidimension $(dim(V)-1,dim(V)-1)$ on $X_t$. We first check that for such currents $S_t$, the positive closed current $(\pi _t)_*(S_t)$ has Lelong number $\geq 1$ at $x_1,\ldots ,x_t$. We check this for example at the point $x_1$. Using the projection from $X_t$ to the blowup of $Y$ at $x_1$, it is enough to check the claim for the case $t=1$. We then need to check that $(\pi _1)_*(S_1)$ has Lelong number at least $1$ at $x_1$. Subtracting $S_1$ from its restriction to $E_1$ if needed, we may assume that $S_1$ has no mass on $E_1$ and in cohomology $\{S_1\}=(\iota _1)_*\{p_1^*(c_1(Y)|_V)-p_1^*(c_1(V))-(k-dim (V)+a)\alpha _1\}$ where $a\geq 0$. Let $b\geq 0$ be the Lelong number of $(\pi _1)_*(S_1)$ at $x_1$. By Siu's theorem (see Siu \cite{siu}) on Lelong numbers combined with the approximation theorem for positive closed currents on compact K\"ahler manifolds of Dinh and Sibony (see Dinh-Sibony \cite{dinh-sibony1}), in cohomology $\{(\pi _1)^*(\pi _1)_*(S_1)\}=\{S_1\}+b\{\iota _*(\alpha _1)\}$. Then we can intersect with $E_1^{dim(V)-1}$ to obtain that $b=k-dim(V)+a\geq k-dim (V)\geq 1$.

We now finish the proof of Case 2. The positive closed currents $(\pi _t)_*(S_t)$ on $X$ have the same cohomology class:
\begin{eqnarray*}
(\pi _t)_*\{S_t\}&=&(\pi _t)_*(\iota _t)_*(c_1(\mathcal{E}))\\
&=&\iota _*(p_t)_*[p_t^*(c_1(Y)|_V)-p_t^*(c_1(V))-(k-dim (V))\sum _{j=1}^t\alpha _j]\\
&=&\iota _*(c_1(Y)|_V-c_1(V)).
\end{eqnarray*}
In particular, they have uniformly bounded masses, and we can extract a cluster point $S$, which is a positive closed current on $X$ of bidimension $(dim(V)-1,dim(V)-1)$. 
Since $(\pi _t)_*(S_t)$ has Lelong number at least $1$ at $x_{t_0}$ for $t\geq t_0$, it follows by the upper-semicontinuity of Lelong numbers (see e.g. Chapter 3 in \cite{demailly}), $S$ has Lelong number at least $1$ at the points $x_1,x
_2,\ldots $. But this contradicts to Siu's theorem (see \cite{siu}) that the set of points where $S$ has Lelong number at least $1$ is a subvariety of $X$ of dimension $\leq dim (V)-1$ and our assumption that $\bigcup _{j=1}^{\infty}x_j$ is not contained in a variety of dimension $dim(V)-1$. 

{\bf Example 6.} In this example, we show that the assumptions in Theorem \ref{TheoremEk} ii') (and those of Lemmas \ref{LemmaP4} and \ref{LemmaManifoldDimension4} and Corollary \ref{CorollaryExtremeCases}) can not be removed. We consider here $Y$ a complex projective manifold of dimension $3$ and $C\subset Y$ a smooth curve isomorphic to $\mathbb{P}^1$. Assume that $Y$ satisfies the non-vanishing condition $A(r,q)$ (where $r\geq 0$) (correspondingly the non-vanishing conditions $B(r,q)$, $NA(r,q)$ and $NB(r,q)$). In Corollary \ref{CorollaryExtremeCases} and Example 4 we showed that if the following two conditions are satisfied:

i) $c_1(Y).C-c_1(C)<0$, 

and 

ii) $C$ is not the only effective cycle in its cohomology class,

then $X$ also satisfies the non-vanishing condition $A(r,q)$ (respectively the non-vanishing conditions $B(r,q)$, $NA(r,q)$ and $NB(r,q)$). 

We now show that if either one of these two conditions is removed then the above result is no longer true. 

Proof: 

1) (This example was given in Section 4.1 in \cite{truong}): It was proved by McMullen (see \cite{mcmullen1}) that there are distinct points $x_1,\ldots ,x_m\in \mathbb{P}^2$ such that the blowup $\pi :S\rightarrow \mathbb{P}^2$ at these points has an automorphism of positive entropy. Consider $X=S\times \mathbb{P}^1$, then $X$ also has an automorphism of positive entropy. Thus $X$ does not satisfy the non-vanishing condition $A(0,0)$. $X$ can also be represented as the blowup of $Y=\mathbb{P}^2\times \mathbb{P}^1$ at the disjoint curves $x_1\times \mathbb{P}^1,\ldots , x_m\times \mathbb{P}^1$, each of these curves is isomorphic to $\mathbb{P}^1$. Here $Y$ satisfies the non-vanishing condition $A(0,0)$. In this example we can see that $c_1(Y)|_{x_j\times \mathbb{P}^1}-c_1(x_j\times \mathbb{P}^1)=0$, hence condition i) above is not satisfied. Since the curves $x_j\times \mathbb{P}^1$ move in a family of dimension $2$ of curves, the condition ii) above is satisfied. 

2) Assume now that the following claim is true:

Claim 1: For any $Y$ of dimension $3$ satisfying the non-vanishing condition $A(0,0)$ and $C\subset Y$ a smooth curve isomorphic to $\mathbb{P}^1$ such that $c_1(Y)|_C-c_1(C)<0$ , the manifold $X$ always satisfies the non-vanishing condition $A(0,0)$. Then we will arrive at a contradiction. First, we show the following:

Claim 2: Assume that Claim 1 is true. Then for any $Y$ of dimension $3$ satisfying the non-vanishing condition $A(0,0)$ and $C\subset Y$ a smooth curve isomorphic to $\mathbb{P}^1$, the manifold $X$ always satisfies the non-vanishing condition $A(0,0)$. (In Claim 2 we do not need the assumption $c_1(Y)|_C-c_1(C)<0$.)

Proof of Claim 2: Let $t$ be a large number and $x_1,\ldots ,x_t$ be distinct points in $C$. Let $Z_1\rightarrow Y$ be the blowup of $Y$ at $x_1,\ldots ,x_t$, and let $Z\rightarrow Z_1$ be the blowup at the strict transform $\widetilde{C}$ of $C$. Since $Y$ satisfies $A(0,0)$ and $Z_1\rightarrow Y$ is a composition of point-blowups, $Z_1$ also satisfies $A(0,0)$. In Example 5 we showed that for $t$ large then $c_1(Z_1)|_{\widetilde{C}}-c_1(\widetilde{C})<0$. By Claim 1 applied to the blowup $Z\rightarrow Z_1$, $Z$ also satisfies $A(0,0)$. 

Next we show that $X$, which is the blowup of $Y$ at the curve $C$, also satisfies the condition $A(0,0)$. Let $\tau :Z\rightarrow Y$ be the composition of the blowups $Z\rightarrow Z_1$ and $Z_1\rightarrow Y$. We first check that $\tau ^{-1}(C)$, as a subscheme of $Z$, is a hypersurface. This is easy to see on the level of sets. Now we check that the ideal of $\tau ^{-1}(C)$ is locally generated by an element. This question is local, hence we reduce to the case where $Y=\mathbb{C}^3$, $x_1=(0,0,0)$ and $C=\{x=y=0\}$. Then a local coordinate for $Z$ is given by (see e.g. Section 1 in \cite{bedford-kim3}): $\tau :Z\ni (t_0,\eta _1, \xi _2)\mapsto (t_0\eta _1\xi _2, \eta _1\xi _2,\xi _2)\in Y$. Hence $\tau ^{-1}(C)$ is generated by $t_0\eta _1\xi _2$ and $\eta _1\xi _2$, hence is generated by one element $\eta _1\xi _2$ as claimed. 

Therefore, applying the universal property of blowups (see e.g. Proposition 7.14 in Chapter 2 in Hartshorne \cite{hartshorne} and Theorem 4.1 in Chapter 4 in Fischer \cite{fischer}), there is a birational holomorphic map $\sigma :Z\rightarrow X$. Then we can finish the proof of Claim 2 as follows: Let $\zeta$ be a nef class on $X$ such that $\zeta ^2=0$. Then $\xi =\sigma ^*(\zeta )$ is a nef class on $Z$ such that $\xi ^2=0$. Since $Z$ satisfies the non-vanishing condition $A(0,0)$, it follows that $\xi \in H^{1,1}(Z,\mathbb{Q})$. Then $$\zeta =\sigma _*(\xi )\in \sigma _*(H^{1,1}(Z,\mathbb{Q}))\subset H^{1,1}(X,\mathbb{Q}).$$ 
Thus $X$ satisfies the non-vanishing condition $A(0,0)$, and Claim 2 is proved. 

Finally, we obtain a contradiction to Claim 1. Let $x_1,\ldots ,x_m\in \mathbb{P}^2$ be such that the blowup $S\rightarrow \mathbb{P}^2$ has an automorphism of positive entropy. Let $X=S\times \mathbb{P}^1$, then $X$ does not satisfy the non-vanishing condition $A(0,0)$. However, $X$ is a blowup of $Y=\mathbb{P}^2\times \mathbb{P}^1$ at curves $x_1\times \mathbb{P}^1,\ldots ,x_m\times \mathbb{P}^1$, and $Y$ satisfies the non-vanishing condition $A(0,0)$. Hence if Claim 1 were true, then by Claim 2 $X$ also satisfies the non-vanishing condition $A(0,0)$, which is impossible. Hence Claim 1 is not true.    

\section{Proofs of Theorems \ref{TheoremArqCondition}, \ref{TheoremBrqCondition}, \ref{TheoremPicardNumber1}, \ref{TheoremAmpleCanonicalDivisor} and \ref{TheoremHyperKahler}}
\label{SectionProofsOfTheorems}

Before giving the proofs of the results in Section \ref{Introduction}, we recall some facts about spectral radius of automorphisms. The readers may see e.g. \cite{dinh-sibony}, \cite{bayraktar-cantat} or \cite{truong} and references therein for more on these facts. 

Let $X$ be a compact K\"ahler manifold of dimension $k$ and let $f:X\rightarrow X$ be an automorphism. Then we define the dynamical degrees $\lambda _p(f)=$the spectral radius of the pullback map $f^*:H^{p,p}(X)\rightarrow H^{p,p}(X)$. Then $\lambda _p(f)$ are log-concave, in particular $\lambda _p(f)\leq \lambda _1(f)^p$ for all $p=0,\ldots ,k$. Also, by Poincare duality $\lambda _p(f)=\lambda _{k-p}(f^{-1})$. We can also compute dynamical degree in the following way: Let $\omega$ be a K\"ahler class and choose an arbitrary norm on $H^{p,p}(X)$. Then 
\begin{eqnarray*}
\lambda _p(f)=\lim _{n\rightarrow\infty}||(f^n)^*(\omega ^p)||^{1/n}.
\end{eqnarray*}
The cone of nef cohomology classes $H^{1,1}_{nef}(X)$ is preserved by $f^*$. Then by a result of linear algebra, there is a non-zero $\zeta \in H^{1,1}_{nef}(X)$ such that $f^*(\zeta )=\lambda _1(f)\zeta$. Note that if $\lambda _1(f)>1$ then it is irrational. Since $f^*$ preserves $H^{2}(X,\mathbb{Z})$ it follows that when $\lambda _1(f)>1$ the eigen-class $\zeta$ can not be proportional to a rational cohomology class.  
  
The algebraic analogs of the above facts are as follows. Let $X$ be a complex projective manifold of dimension $k$. Then $X$ is K\"ahler and we can define dynamical degrees as above. Moreover, $f^*$ preserves $NS(X)$, and hence also preserves $NS_{\mathbb{Q}}(X)$ and $NS_{\mathbb{R}}(X)$. Since $NS_{\mathbb{R}}(X)\subset H^{1,1}(X,\mathbb{R})$ and $NS_{\mathbb{R}}(X)$ contains ample divisors, it follows that $\lambda _1(f)=$the spectral radius of $f^*:NS_{\mathbb{R}}(X)\rightarrow NS_{\mathbb{R}}(X)$. Again, there is a non-zero nef class $\zeta\in NS_{\mathbb{R}}(X)$ such that $f^*(\zeta )=\lambda _1(f)\zeta$, and if $\lambda _1(f)>1$ then such a $\zeta$ can not be proportional to an element in $NS_{\mathbb{Q}}(X)$.    

Now we give the proofs of Theorems \ref{TheoremArqCondition}, \ref{TheoremBrqCondition}, \ref{TheoremPicardNumber1}, \ref{TheoremAmpleCanonicalDivisor} and \ref{TheoremHyperKahler}.

\begin{proof}[Proof of Theorem \ref{TheoremArqCondition}] We prove for the case of non-vanishing condition $A(r,q)$, the case of non-vanishing condition $NA(r,q)$ is similar. 

Let $f:X\rightarrow X$ be an automorphism. We will show that $\lambda_1(f)=0$. Assume otherwise, i.e. $\lambda _1(f)>1$, and we will reach a contradiction. Let $\zeta\in H^{1,1}(X)$ be a non-zero nef class such that $f^*(\zeta )=\lambda _1(f)\zeta$. Then $\zeta $ is not proportional to a rational cohomology class, hence by the non-vanishing condition $A(r,q)$ we have $\zeta ^{k-r-1-q}.K_X^q\not= 0$. In particular, $\zeta ^{k-r-1-q}\not= 0$ and $f^*(\zeta ^{k-r-1-q})=\lambda _1(f)^{k-r-1-q}\zeta ^{k-r-1-q}$. Hence we deduce $\lambda_{k-r-1-q}(f)\geq \lambda _1(f)^{k-r-1-q}$. The log-concavity of dynamical degrees implies
\begin{eqnarray*}
\lambda _j(f)=\lambda _1(f)^j
\end{eqnarray*}    
for all $0\leq j\leq k-r-1-q$. The assumption that $k-r-1-q>r+1$ and $\lambda _1(f)>1$ implies 
\begin{eqnarray*}
\lambda _{r+1}(f)=\lambda _1(f)^{r+1}<\lambda _1(f)^{k-r-1-q}. 
\end{eqnarray*} 
Because $f$ is an automorphism, we have $f^*(K_X)=K_X$. Therefore, since $\zeta ^{k-r-1-q}.K_X^q\not= 0$ and $f^*(\zeta ^{k-r-1-q}.K_X^q)=\lambda _1(f)^{k-r-1-q}\zeta ^{k-r-1-q}.K_X^q$, we deduce that 
$$\lambda _{k-r-1}(f)\geq \lambda _1(f)^{k-r-1-q}>\lambda _1(f)^{r+1}=\lambda _{r+1}(f).$$ 
Apply the same argument to the inverse map $f^{-1}$, we have $\lambda _{k-r-1}(f^{-1})>\lambda _{r+1}(f^{-1})$. Since $\lambda _{k-r-1}(f^{-1})=\lambda _{r+1}(f)$ and $\lambda _{r+1}(f^{-1})=\lambda _{k-r-1}(f)$ we obtain $\lambda _{r+1}(f)>\lambda _{k-r-1}(f)$. But the last inequality is contradict to the inequality $\lambda _{k-r-1}(f)>\lambda _{r+1}(f)$ which we obtained before, and we have a contradiction. Therefore $\lambda _1(f)=1$ as wanted. 
\end{proof}
\begin{proof}[Proof of Theorem \ref{TheoremBrqCondition}] We prove for the case of the non-vanishing condition $B(r,q)$, the case of $NB(r,q)$ is similar.

From Theorem \ref{TheoremArqCondition}, we already know that if $f\in Aut(X)$ then $\lambda _1(f)=1$. Replacing $f$ by an iterate of $f$ if needed, we can assume that all of the eigenvalues of $f^*:H^{1,1}(X)\rightarrow H^{1,1}(X)$ are $1$. To show that $Aut(X)$ has only finitely many connected components, it suffices to show that the size of the largest Jordan block of $f^*:H^{1,1}(X)\rightarrow H^{1,1}(X)$ is $1$.

We follow Steps 2-4 of the proof of Theorem 1.1 in \cite{bayraktar-cantat}. For each $p=1,\ldots ,k$, we choose a norm $||.||_p$ on $H^{p,p}(X)$, and define 
\begin{eqnarray*}
m_p(f)=\lim _{n\rightarrow \infty}\frac{\log ||(f^*)^n||_p}{\log n}. 
\end{eqnarray*} 
Then to prove Theorem \ref{TheoremBrqCondition}, it suffices to show that $m_1(f)=0$. 

First, we observe that the map $p\mapsto m_p(f)$ is concave, i.e. $m_{p-1}(f)+m_{p+1}(f)\leq 2m_p(f)$. This can be proved as proving the log-concavity of the dynamical degrees of $f$, see \cite{bayraktar-cantat} for more details. 

Now we show that $m_1(f)=0$. Otherwise, then $m_1(f)\geq 1$ and we will obtain a contradiction. If we let $\omega$ be a K\"ahler class, then the sequence $n^{-m_1(f)}(f^*)^n(\omega )$ converges to a non-zero nef class $u$. By the non-vanishing condition $B(r,q)$, it follows that $u^{k-r-1-q}.K_X^q\not= 0$. In particular, $u^{k-r-1-q}\not= 0$. Hence 
\begin{eqnarray*} 
\lim _{n\rightarrow\infty}n^{-m_1(f)(k-r-1-q)}(f^*)^n(\omega ^{k-r-1-q})=u^{k-r-1-q}\not= 0.
\end{eqnarray*}
In particular $m_{k-r-1-q}(f)\geq (k-r-1-q)m_1(f)$, and the concavity of $p\mapsto m_p(f)$ implies that $m_j(f)=jm_1(f)$ for all $0\leq j\leq k-r-1-q$. Since $k-r-1-q>r+1$ and we assume that $m_1(f)>0$ we have
\begin{eqnarray*}
m_{r+1}(f)=(r+1)m_1(f)<(k-r-1-q)m_1(f).
\end{eqnarray*}

We also have
\begin{eqnarray*} 
\lim _{n\rightarrow\infty}n^{-m_1(f)(k-r-1-q)}(f^*)^n(\omega ^{k-r-1-q}.K_X^q)=u^{k-r-1-q}.K_X^q\not= 0.
\end{eqnarray*}
Thus $m_{k-r-1}(f)\geq (k-r-1-q)m_1(f)>m_{r+1}(f)$. Apply the same argument to $f^{-1}$ we obtain $m_{k-r-1}(f^{-1})>m_{r+1}(f^{-1})$. But by Poincare duality, we have $m_{k-r-1}(f^{-1})=m_{r+1}(f)$ and $m_{r+1}(f^{-1})=m_{k-r-1}(f)$, and obtain a contradiction. Therefore we must have $m_1(f)=0$, as wanted.
\end{proof}
 
\begin{proof}[Proof of Theorem \ref{TheoremPicardNumber1}]

Since $X_0$ has Picard number $1$, if $\zeta \in NS_{\mathbb{R}}(X_0)$ is nef and non-zero then it is ample, and hence $\zeta ^k\not= 0$. Hence $X_0$ satisfies the non-vanishing condition $B(r,0)$ for any $r\geq 0$.

1) Since the submanifolds $V_1,\ldots ,V_m$ are pairwise disjoint, arguing as in proof of Lemma \ref{LemmaP4}, it suffices to consider the case where $\pi :X_1\rightarrow X_0$ is a single blowup along a smooth manifold $V$ of a fixed dimension $dim(V)$. We now need to show that for a generic choice of $V$, then $X_1$ satisfies the non-vanishing condition $NB(1,0)$, i.e. if $\zeta \in Nef(X)=H^{1,1}(X)\cap NS_{\mathbb{R}}(X)$ is non-zero then $\zeta ^{k-2}\not= 0$. Let $H\in NS_{\mathbb{R}}(X)$ is an ample divisor, normalized so that $H^{dim (V)}|V=1$. Then we may assume that $\zeta =\pi ^*(H)+bE$ for some real number $b$ (we do not need to use the fact the $b$ is non-positive here). Note that $b$ can not be zero, because then $\pi ^*(H^{k-2})=0$, which is absurd. 

We consider two cases:

Case 1: $dim(V)\leq 1$. Pushing forward $\zeta ^{k-2}=0$ by $\pi $ we find that $H^{k-2}=0$ which is absurd since $H$ is ample. 

Case 2: $dim(V)\geq 2$ and $V$ is a complete intersection of smooth hypersurfaces $D_1,\ldots ,D_t$ here $t=$ codimension of $D$ is fixed, where $D_j=d_jH$ in $NS_{\mathbb{R}}(X)$ for positive rational numbers $d_j$. We now show that for a generic choice of $d_1,\ldots ,d_t$ then there is no $b\in \mathbb{R}$ such that $(\pi ^*(H)+bE)^{k-2}=0$. In fact, if $(\pi ^*(H)+bE)^{k-2}=0$ then both $(\pi ^*(H)+bE)^{k-2}.E^2=0$ and $(\pi ^*(H)+bE)^{k-2}.\pi ^*(H)E=0$. 

We expand these two  equations explicitly in several first terms
\begin{eqnarray*}
0&=&(\pi ^*(H)+bE)^{k-2}.E^2=b^{k-2}E^k+C(1,k-2)b^{k-3}E^{k-1}.\pi ^*(H)\\
&&+C(2,k-2)b^{k-4}E^{k-2}.\pi ^*(H^2)+C(3,k-2)b^{k-5}E^{k-3}.\pi ^*(H^3)\\
&&+C(4,k-2)b^{k-6}E^{k-4}.\pi ^*(H^4)\ldots \\ 
0&=&(\pi ^*(H)+bE)^{k-2}.E.\pi ^*(H)=b^{k-2}E^{k-1}.\pi ^*(H)+C(1,k-2)b^{k-3}E^{k-2}.\pi ^*(H^2)\\
&&+C(2,k-2)b^{k-4}E^{k-3}.\pi ^*(H^3)+C(3,k-2)b^{k-5}E^{k-4}.\pi ^*(H^4)+\ldots 
\end{eqnarray*}

Observation 1: Note also that in the polynomial $(\pi ^*(H)+bE)^{k-2}.E^2$, the coefficients of $b^{k-2-j}$ (where $j>dim(V)$) are zeros (because then $\pi _*(E^{k-j})=0$). Similarly, in the polynomial $(\pi ^*(H)+bE)^{k-2}.\pi ^*(H)E$ the coefficients of $b^{k-2-j}$ (where $j>dim (V)-1$) are zeros. 

Using Observation 1 and that $b\not= 0$, we define two polynomials 
\begin{eqnarray*}
 f(b)&:=&\frac{1}{b^{k-2-dim(V)}}(\pi ^*(H)+bE)^{k-2}.E^2\\
 g(b)&:=&\frac{1}{b^{k-1-dim(V)}}(\pi ^*(H)+bE)^{k-2}.E.\pi ^*(H).
\end{eqnarray*}

We deduce that any value $b$ for which $(\pi ^*(H)+bE)^{k-2}=0$ must be a common zero of $f(b)$ and $g(b)$. Let $\mathcal{E}=N_{V/X_0}$ is the normal vector bundle of $V$ in $X_0$. Then as in Remark 1 at the end of Section \ref{SectionBlowupsAndNonVanishingConditions}, we see that the coefficients of $f(b)$ and $g(b)$ can be described in terms of Chern classes $c_j(\mathcal{E})$. Since $V$ is the complete intersection of $D_1,\ldots ,D_t$ and $D_j=d_jH$ in $NS_{\mathbb{R}}(X)$, it follows from Example 3.2.12 that $c(\mathcal{E})=(1+d_1H)\ldots (1+d_tH)$. From this we see that $$c_j(\mathcal{E}).H^{dim (V)-j}|_V=s_j(d_1,\ldots ,d_t)H^{dim(V)}|_V,$$        
where $s_j(d_1,\ldots ,d_t)$ is the $j$-th elementary symmetric function of $d_1,\ldots ,d_t$. Therefore the coefficients of $f(b)$ and $g(b)$ are polynomials in variables $d_1,\ldots ,d_t$.

Now these two polynomials $f(b)$ and $g(b)$ has at least one common solution if and only if their resultant $R(f,g)=0$. Since the coefficients of $f(b)$ and $g(b)$ are polynomials in variables $d_1,\ldots ,d_t$, this resultant $R(f,g)$ is also a polynomial in variables $d_1,\ldots ,d_t$. Hence either $R(f,g)$ is zero identically or it is non-zero for a generic choice of $d_1,\ldots ,d_t$. Thus if we can show that $R(f,g)\not= 0$ for a special choice of rational numbers $d_1,\ldots ,d_t$ then 1) is proved. Here we only use that $f(b)$ and $g(b)$ are polynomials of $b$ whose coefficients are fixed polynomials in variables $d_1,\ldots ,d_t$, and not the fact that they were constructed from some submanifolds $V$ of $X_0$. Hence we do not need to choose $d_1,\ldots ,d_t$ to be positive numbers, hence the choice may not correspond to any actual submanifold $V$. The special values we choose now is $d_1=1$ and $d_2=\ldots =d_t=0$. We will show that for this choice then $R(f,g)\not= 0$, and it is the same as showing the two polynomials $f(b)$ and $g(b)$ has no common zero. As stated before, this case does not correspond to any actual $V$, but it does not affect the argument below, $f(b)$ and $g(b)$ are formally constructed from the Chern classes of $V$ not from $V$ itself. Hence we assume that $d_1=1$ and $d_2=\ldots =d_t=0$ correspond to a (virtual) manifold $V$. This means that $V$ is a (virtual) manifold of codimension $t$ having $c_0(\mathcal{E})=1,$ $c_1(\mathcal{E})=H|_V$ and other Chern classes are zeros, and we ignore the fact that such a $V$ can not be a complete intersection when $t>1$ or can exist at all.    

(This choice of $d_1=1$ and $d_2=\ldots =d_t=0$ can actually be made rigorous as follows. Assume that for any $V$ which is a complete intersection, then the two polynomials $f_V(b)$ and $g_V(b)$ has a common solution. We choose in particular $d_1=d$, and $d_2=\ldots =d_t=1$, here $d$ can be as large as we desire. The idea is to take the limit when $d$ goes to $\infty$. Note that when $d$ is large enough then $\pi _*(E^{t+j})$ is approximately $c_1(\mathcal{E})^j$ and hence is approximately $d^jH^j$, as can be seen from the computations in Example 1 in Section \ref{SectionBlowupsAndNonVanishingConditions}. If we rescale $\widetilde{f}_V(b)=d^{dim(V)-2}f_V(b/d)$ and $\widetilde{g}_V(b)=d^{dim(V)-1}g_V(b/d)$, then we see that the polynomials $\widetilde{f}_V(b)$ and $\widetilde{g}_V(b)$ have bounded coefficients, of bounded degrees, and have top coefficients bounded away from zero. Therefore, since they have a common solution for any choice of $d_1=d$, the same is true for their limits. Their limits are exactly the polynomials $f(b)$ and $g(b)$ corresponding with the choice of $d_1=1$ and $d_2=\ldots =d_t=0$.)

Then as in Example 2 at the end of Section \ref{SectionBlowupsAndNonVanishingConditions}, we have the defining equation for $H^*(E)$ is $e^t=\pi _E^*(H|_V)e^{t-1}$. Then we can use the computations in Example 2 to show that
\begin{eqnarray*}
 f(b)&=&\frac{1}{b^{k-2-dim(V)}}\times\\
 &&[b^{k-2}+C(1,k-2)b^{k-3}+C(2,k-2)b^{k-4}+C(3,k-2)b^{k-5}+C(4,k-2)b^{k-6}\ldots ],\\
 g(b)&=&\frac{1}{b^{k-1-dim(V)}}\times\\
 && [b^{k-2}+C(1,k-2)b^{k-3}+C(2,k-2)b^{k-4}+C(3,k-2)b^{k-5}+\ldots ].
\end{eqnarray*}
Here we use the convenience (see the definition of $f$ and $g$) that the coefficients for $b^{k-2-j}$ in the bracket for $f$ are zero when $j>dim (V)$, and the coefficients for $b^{k-1-j}$ in the bracket for $g$ are zero when $j>dim (V)$. Now it is easy to arrive at the proof of 1). We present in the below the proof for the cases $dim(V)=0,1,2,3$, the proofs for other cases are similar and hence are omitted. 

Case $dim(V)=0$: In this case, the equations $f(b)=0$ and $g(b)=0$ becomes
\begin{eqnarray*}
0&=&f(b):=1\\ 
0&=&g(b):=0,
\end{eqnarray*}
and this system has no solution.   

Case $dim(V)=1$: In this case, the equations $f(b)=0$ and $g(b)=0$ becomes 
\begin{eqnarray*}
0&=&f(b):=b+C(1,k-2)\\
0&=&g(b):=1,
\end{eqnarray*}
and this system has no solution. 

Case $dim(V)=2$: In this case, the equations $f(b)=0$ and $g(b)=0$ becomes 
\begin{eqnarray*}
0&=&f(b):=b^2+C(1,k-2)b+C(2,k-2)\\
0&=&g(b):=b+C(1,k-2).
\end{eqnarray*}
Since $f(b)=bg(b)+C(2,k-2)$ it follows that this system has no solution. 

Case $dim(V)=3$: In this case, the equations $f(b)=0$ and $g(b)=0$ becomes 
\begin{eqnarray*}
0&=&f(b):=b^3+C(1,k-2)b^2+C(2,k-2)b+C(3,k-2)\\
0&=&g(b):=b^2+C(1,k-2)b+C(2,k-2).
\end{eqnarray*}
Since $f(b)=bg(b)+C(3,k-2)$ it follows that this system has no solution. 

2) and 3) follows from 1) and Theorems \ref{TheoremBrqCondition} and \ref{TheoremEk}. 

\end{proof} 
 
\begin{proof}[Proof of Theorem \ref{TheoremAmpleCanonicalDivisor}]

1) Let $r=k-l-3$ and $q=2l+3-k$. By assumption we have $r,q\geq 0$. Observe also that $k-r-1-q=k-l-1=r+2>r+1$. By assumption, if $\zeta \in NS_{\mathbb{R}}(X_0)$ is nef and non-zero then $\zeta ^{k-r-1-q}=\zeta ^{k-l-1}\not= 0$. Because $K_{X_0}$ is anti-ample, we have $\zeta ^{k-r-1-q}.K_{X_0}^q$ is also non-zero. Hence we can apply Theorems \ref{TheoremBrqCondition} and \ref{TheoremEk} i).

2) 

i) We observe that $K_{X_0}$ is anti-ample. Hence applying 1), it suffices to show that $X_0$ satisfies the non-vanishing condition $A(k-l-1,0)$. Let $\zeta \in H^{1,1}_{nef}(X_0)$ be non-zero and such that $\zeta ^l=0$, we will show that $\zeta $ is proportional to a rational cohomology class. Since $l\leq k_1+k_2$ this would follow if we can show the claim for $l=k_1+k_2$. To this end, first we observe that $\zeta =\sum _{j=1}^m \pi _j^*(\zeta _j)$ where $\zeta _j\in H^{1,1}(\mathbb{P}^{k_j})$ are nef, and $\pi _j:X_0\rightarrow X_{0,j}$ are the projections. Since
\begin{eqnarray*}
0=\zeta ^{k_1+k_2}\geq \sum _{i<j}\pi _i^*(\zeta _i^{k_1}).\pi _j^*(\zeta _j^{k_2}),
\end{eqnarray*}
and $k_i\geq k_1$ and $k_j\geq k_2$ in the above, we have
\begin{eqnarray*}
\pi _i^*(\zeta _i^{k_1}).\pi _j^*(\zeta _j^{k_2})=0
\end{eqnarray*}
for any $i<j$. It follows that there is at most one index $i_0$ such that $\zeta _{i_0}\not= 0$. Then $\zeta =\pi _{i_0}^*(\zeta _{i_0})\in \pi _{i_0}^*(H^{1,1}(\mathbb{P}^{k_{i_0}}))$ is proportional to a rational cohomology class.  

ii) Similar to the proof of i).
\end{proof}
\begin{proof}[Proof of Theorem \ref{TheoremHyperKahler}] For the first part of the theorem, it suffices to show that $X_0$ satisfies the non-vanishing condition $B(l-1,0)$. The latter is a result of Verbitsky, see \cite{verbitsky}.

We end the proof showing $\lambda _{2l-p}(f)=\lambda _p(f)=\lambda _1(f)^p$ for any $0\leq p\leq l$. In \cite{oguiso1}, this was proved by first showing that $\lambda _1(f)=\lambda _1(f^{-1})$ using special properties of automorphisms on hyper-K\"ahler manifolds. However, this can be proved directly for any compact K\"ahler manifold of even dimension $k=2l$ satisfying the condition $A(l-1,0)$ as follows. It suffices to show the claim for $\lambda _1(f)>1$. In this case, the eigenvector $\zeta$ for $\lambda _1(f)$ is not rational, hence $\zeta ^{l}\not= 0$. Therefore $\lambda _p(f)=\lambda _1(f)^p$ for $0\leq p\leq l$, by the log-concavity of dynamical degrees. Similarly, for $0\leq p\leq l$, we have $\lambda _{2l-p}(f)=\lambda _p(f^{-1})=\lambda _1(f^{-1})^p$. Because $l=2l-l$, we have $\lambda _l(f)=\lambda _{l}(f^{-1})$ by Poincare duality. Therefore, $$\lambda _1(f^{-1})^l=\lambda _l(f^{-1})=\lambda _l(f)=\lambda _1(f)^l,$$
which implies $\lambda _1(f^{-1})=\lambda _1(f)$, and the proof is completed. 

\end{proof}

\end{document}